\newtheorem{theorem}{Theorem}
\newtheorem{lemma}[theorem]{Lemma}
\newtheorem{proposition}[theorem]{Proposition}
\theoremstyle{definition}
\newtheorem{definition}[theorem]{Definition}
\newtheorem{example}[theorem]{Example}
\newtheorem{remark}[theorem]{Remark}
\definecolor{oliwkowy}{HTML}{627037}
\definecolor{cKlaus}{rgb}{0.0,0.55,0.63}
\definecolor{loesung}{rgb}{0.6,0.10,0.33}
\definecolor{cKlausOK}{rgb}{0.3,0.40,0.33}
\definecolor{intOrange}{rgb}{1.0,.310,.0}  
\definecolor{cMakro}{rgb}{1,0,0}  
\definecolor{darkgreen}{RGB}{0, 128, 0}
\definecolor{cFrederik}{rgb}{0.3,0.40,0.53}
\definecolor{cJan}{rgb}{0.1,0.20,0.73}
\newcommand{\peeref}[1]{{#1}}
\newcommand{\idm}{\mathfrak{m}}
\newcommand{\PP}{\mathbb P}
\newcommand{\A}{\mathbb A}
\newcommand{\N}{\mathbb N}
\newcommand{\R}{\mathbb R}
\newcommand{\T}{\mathbb T}
\newcommand{\V}{\mathbb V}
\newcommand{\Z}{\mathbb Z}
\newcommand{\C}{\mathbb C}
\renewcommand{\P}{\mathbb P}
\newcommand{\CA}{{\mathcal A}}
\newcommand{\CE}{{\mathcal E}}
\newcommand{\CF}{{\mathcal F}}
\newcommand{\CH}{{\mathcal H}}
\newcommand{\CO}{{\mathcal O}}
\newcommand{\CT}{{\mathcal T}}
\DeclareMathOperator{\spec}{Spec}
\DeclareMathOperator{\supp}{supp}
\DeclareMathOperator{\id}{id}
\DeclareMathOperator{\conv}{conv}
\DeclareMathOperator{\trace}{tr}
\DeclareMathOperator{\rank}{rk}
\DeclareMathOperator{\orb}{orb}
\DeclareMathOperator{\End}{End}
\newcommand{\til}[1]{\widetilde{#1}}
\renewcommand{\iff}{\Longleftrightarrow}
\newcommand{\then}{\Rightarrow}
\newcommand{\gHom}{\operatorname{Hom}}
\newcommand{\gEnd}{\operatorname{End}}
\newcommand{\lEnd}{End}
\newcommand{\spann}{\operatorname{span}}
\newcommand{\kst}{\,|\;}
\newcommand{\kSt}{\,\big|\;}
\newcommand{\ku}{\underline}
\newcommand{\ks}{\scriptstyle}
\newcommand{\surj}{\rightarrow\hspace{-0.8em}\rightarrow}
\newcommand{\kss}{\scriptscriptstyle}
\newcommand{\kbb}{{\kss \bullet}}
\newcommand{\ko}{\overline}
\newcommand{\toric}{\T\V}
\newcommand{\HR}{\CH}
\newcommand{\coHikks}{\Phi} 
\newcommand{\cohikks}{\phi} 
\newcommand{\cohikksT}{\phi} 
\newcommand{\tcohikks}{\til{\phi}} 
\newcommand{\PhiAlg}{\CA} 
\newcommand{\cHAlg}{\PhiAlg}
\newcommand{\kbox}{\mbox}
\begin{document}

\title[Toric co-Higgs sheaves]
{Toric co-Higgs sheaves}
\author[K.~Altmann]{Klaus Altmann
}
\address{Institut f\"ur Mathematik,
FU Berlin,
Arnimalle 3,
D-14195 Berlin,
Germany}
\email{altmann@math.fu-berlin.de}
\author[F.~Witt]{Frederik Witt}
\address{
Fachbereich Mathematik,
Universit\"at Stuttgart,
Pfaffenwaldring 57,
D-70569 Stuttgart, 
Germany}
\email{witt@mathematik.uni-stuttgart.de}
\thanks{MSC 2010: 14M25; 
52B20, 
14J60; 
Key words: toric variety, co-Higgs sheaves, filtrations, polytopes}

\begin{abstract}
We characterise and investigate co-Higgs sheaves and associated algebraic and
combinatorial invariants on toric varieties. In particular, we compute
explicit examples.
\end{abstract}

\maketitle

\section{Introduction}
\label{intro}

\subsection{Co-Higgs sheaves on toric varieties}
\label{toricCoHiggs}
\peeref{Let $X$ be a normal variety over $\C$ and $\CE$ 
a reflexive sheaf of $\CO_X$-modules. 
Consider an $\CO_X$-linear map 
$\Phi:\CE\to\CE\otimes_{\CO_X}\CT_X$, where $\CT_X$ 
denotes the tangent sheaf of $X$. We define the homomorphism 
$\Phi\wedge\Phi:\CE\to\CE\otimes_{\CO_X}\Lambda^2\CT_X$ 
as the composition
\begin{equation}
\label{eq:def.wedge}
\xymatrix{
\CE\ar[r]^{\Phi\phantom{aaaa}}&\CE\otimes\CT_X\ar[r]^{\Phi\otimes\id\phantom{aaaa}}&\CE\otimes\CT_X\otimes\CT_X\ar[r]^{\quad\id\otimes\wedge^2\phantom{a}}&\CE\otimes\Lambda^2\CT_X.
}
\end{equation}

\begin{definition}
\label{def:Higgs}
If $\Phi\wedge\Phi=0$ holds, $(\CE,\Phi)$ is called a {\em \kbox{co-Higgs} sheaf};
$\Phi:\CE\to\CE\otimes\CT_X$ is refered to as the {\em co-Higgs field}.
\end{definition}

In contrast, a {\em Higgs sheaf} is given by an $\CO_X$-linear map 
$\Psi:\CE\to\CE\otimes_{\CO_X}\Omega_X^1$ with $\Psi\wedge\Psi=0$, where
$\Omega_X^1$ denotes the cotangent sheaf of $X$.
The latter notion goes back to~\cite{hi87} and~\cite{si94I}
while co-Higgs sheaves originated in~\cite{hi11} and~\cite{ra11} 
in the context of Hitchin's generalised geometries. Indeed, if $X$ is smooth, 
co-Higgs sheaves with $\CE=\CT_X$ have a natural 
interpretation in terms of generalised complex structures on $X$~\cite{gu11,hi11}.

\medskip

In this article we study co-Higgs sheaves $(\CE,\Phi)$ 
over a complete normal toric variety $X$ with character lattice $M$ and
$\CE$ a toric sheaf. In particular, we can 
appeal to Klyachko's description of such sheaves as does 
the very recent article~\cite{bdmr20}, 
cf.~Section~\ref{toric-co-Higgs}.
However, in contrast to \cite{bdmr20}, we do not only focus on 
invariant, i.e., $M$-homogeneous co-Higgs fields, but on 
completely general ones. This leads to combinatorial invariants such as 
the {\em Higgs polytope} and the {\em Higgs range} which reflect 
the position of possible multidegrees of co-Higgs fields.

\subsection{Plan of the paper}
\label{planPaper}
We briefly summarise our main results and the content of the paper. 

\medskip

Section~\ref{toric-co-Higgs} reviews Klyachko's classification 
of toric sheaves and discusses some examples. 

\medskip

Theorem~\ref{th-describeHiggs} in Section~\ref{HiggsAlgebra} 
gives rise to a combinatorial description of  
co-Higgs sheaves based on Klyachko's formalism. 
We then briefly neglect the integrability condition $\Phi\wedge\Phi=0$ and 
focus on general $\CO_X$-linear maps $\Phi:\CE\to\CE\otimes\CT_X$ 
which we call {\em pre}-co-Higgs fields.
The reason for this is that pre-co-Higgs fields behave well under
sums, i.e., under decomposition into homogeneous components.
Afterwards, we use the integrability condition $\Phi\wedge\Phi=0$ to define a 
family of endomorphism algebras paramatrised by the torus 
(Proposition~\ref{prop:AlgT}). 

\medskip

Section~\ref{sec:CombInv} introduces two combinatorial invariants. 
First, a toric co-Higgs field gives rise to the {\em Higgs polytope} 
in the character lattice by taking the convex hull of its homogeneous 
degrees. Second, the totality of degrees of all possible toric pre-co-Higgs 
fields defines a further polytope, the {\em Higgs range}. 

\medskip

The computation of the Higgs range for co-Higgs sheaves $(\CT_X,\Phi)$ 
on a smooth toric surface $X$ will be the endeavour 
for the rest of this paper. In Section~\ref{revP2} we explain 
the computation for the projective plane $\PP^2$, 
cf.\ Theorem~\ref{th-HiggsRangePb}. 
Further, we sketch the case of 
Hirzebruch and Fano surfaces in Section~\ref{sec:MInSur}. 
Finally, we exhibit explicit Higgs polytopes on del Pezzos of 
degree $6$ and $7$ in Subsection~(\ref{subsec:HiggsPolyDP}). 
In these examples every subpolytope of the Higgs range 
can be realized as the Higgs polytope of some toric co-Higgs field.

\subsection{Convention}
\label{noCo}
As we will work exclusively with co-Higgs sheaves 
(with Remark~\ref{rem:NilHiggs} as sole exception)
\begin{center}
\fbox{we drop the qualifier ``co-'' in the sequel}
\end{center}
to simplify language. Hence we speak about toric Higgs sheaves 
when we actually mean toric co-Higgs sheaves etc.
Hopefully no confusion will arise.

\subsection{Acknowledgements}
\label{ack}
We thank Jan Christophersen for initial collaboration and 
the referee for detailed comments on the manuscript.
Furthermore, Christian Drosten who 
accompanied this project in the last weeks.}

\section{Klyachko's formalism}
\label{toric-co-Higgs}
Klyachko's description of toric vector bundles and, more generally,
of toric reflexive sheaves appeared in \cite{klyachko},
see also \cite{klyachkoICM} for his ICM 2002 talk on this subject.
A short summary can be found in~\cite{payne}.
Further, more recent approaches can be found in \cite{parliaments}
and \cite{tropical}.

\subsection{Klyachko's description of toric sheaves}
\label{Klyachko}
Consider a toric variety $X=\toric(\Sigma)$ given by a fan $\Sigma$ in $N_\R=N\otimes_\Z\R$, where $N$ is a lattice of rank $q$. As usual, its dual $M=\gHom_\Z(N,\Z)$ denotes the {\em character lattice}. Then $X$ contains the torus $T=N\otimes_\Z\C^*$ and we may pick the neutral element $1\in T\subseteq X$. Each $\CO_X$-module $\CE$ gives rise to a $\C$-vector space
$$
E:=\CE(1):=\CE_1/\idm_{X,1}\CE_1,
$$
where $\CE_1$ denotes the stalk of $\CE$ at $1\in X$ and $\idm_{X,1}$ the maximal ideal of $1$. If $\CE$ is a $T$-equivariant, i.e., $T$-linearized, torsion free sheaf on $X$, the global sections of $\CE$ are an $M$-graded subset of $E\otimes_\C\C[M]$. If, in addition, $\CE$ is reflexive, then $\CE$ is already determined 
by its restriction to open subsets whose complements are of codimension equal or greater than two. We briefly refer to $\CE$ as a {\em toric sheaf}.

\medskip

Via Klyachko's description~\cite{klyachko}, a toric sheaf $\CE$ corresponds to a set of decreasing $\Z$-filtrations
$$
E_\rho^\kbb \;=\; [\ldots \supseteq E_\rho^{\ell-1}\supseteq E_\rho^{\ell}\supseteq 
E_\rho^{\ell+1}\supseteq\ldots]\hspace{1em}(\ell\in\Z)
$$
of the vector space $E$ which are \kbox{parametrized by the rays} or one-dimensional cones $\rho\in\Sigma(1)$. By abuse of notation we use $\rho$ for both the ray and its primitive generator. The filtrations encode the sections of $\CE$ on the $T$-invariant open subsets $U_\rho=\toric(\rho)\subseteq X$ defined by $\rho$. Namely, for $r\in M$, we have
$$
e\otimes\chi^r\in\Gamma(U_\rho,\CE)\quad\iff\quad 
e\in E_\rho^{-\langle r,\rho\rangle}.
$$
Since $\bigcup_{\rho\in\Sigma(1)}U_\rho$ is an open  
\peeref{subset of $X$ just missing out a set}
of codimension at least two,
\begin{equation}
\label{eq:poss.deg}
e\otimes\chi^r\in\Gamma(X,\CE)\quad\iff\quad e\in\bigcap_{\rho\in\Sigma(1)}E_\rho^{-\langle r,\rho\rangle}.
\end{equation}

\begin{remark}
\label{rem:klya.comp}
The reflexive sheaf $\CE$ \peeref{is {\em locally free}} if it is 
subject to Klyachko's compatibility condition~\cite{klyachko}: 
For each cone $\sigma\in\Sigma$ there exists a decomposition 
$$
E =\bigoplus_{[u]\in M/(M\cap\sigma^\perp)}
\peeref{E_{[u]}^{(\sigma)}}
$$ 
such that $E^l_\rho = \sum_{\langle u,\rho\rangle\ge l} \peeref{E_{[u]}^{(\sigma)}}$ 
for each ray $\rho$ contained in $\sigma$. 
\peeref{Thus, for each $\sigma$,
the dimensions of the spaces $E_{[u]}^{(\sigma)}$ add up to
$\dim E=\rank\CE$. This can be reformulated as follows. The multisets
$\ku{u}(\sigma)\subset M/(M\cap\sigma^\perp)$ 
of cardinality $\rank\CE$ such that each $[u]$ appears exactly with multiplicity
$\dim E_{[u]}^{(\sigma)}$ are in bijection with isomorphism classes of locally free 
toric sheaves~\cite[Corollary 2.3]{payne}. 
Furthermore, the elementary symmetric functions 
in $\ku{u}(\sigma)$ glue together 
to piecewise polynomial functions on $N$ 
which encode the equivariant Chern classes of 
$\CE$~\cite[Section 3.1]{payne}.
We will see that they control the size of the Higgs range, see 
Definition~\ref{def-HiggsRange} and Remark~\ref{rem:chernRoot}.
}
\end{remark}

\begin{example}
\label{ex-Klyachko}
\peeref{Consider a smooth toric variety $X$ together with the following \peeref{locally free toric sheaves} (cf.\ \cite[Example 2.3]{klyachko}):}
\begin{enumerate}
\item[(i)] Let $D_\rho=\ko{\orb(\rho)}$ be the closure of the orbit defined by
the ray $\rho$. For $D=\sum_\rho \lambda_\rho\,D_\rho$, the invertible sheaf
$\CO(D)$ is encoded by
$$
E_\rho^\ell:=\left\{\begin{array}{cl}
\C & \mbox{if } \ell \leq\lambda_\rho
\\
0 & \mbox{if } \ell \geq \lambda_\rho+1
\end{array}\right\}
\subseteq \C=:E.
$$
\item[(ii)] The cotangent sheaf $\Omega^1_X$ corresponds to the filtration
$$
E_\rho^\ell:=\left\{\begin{array}{cl}
M_\C & \mbox{if } \ell \leq -1
\\
\rho^\bot & \mbox{if } \ell =0
\\
0 & \mbox{if } \ell \geq 1
\end{array}\right\}\subseteq M_\C=:E.
$$
\item[(iii)] On the other hand, the tangent sheaf $\CT_X$ corresponds to the filtration
$$
T^{\ell}_\rho:=\left\{\begin{array}{cl}
N_\C & \mbox{if } \ell \leq 0
\\
\spann(\rho) & \mbox{if } \ell =1
\\
0 & \mbox{if } \ell \geq 2.
\end{array}\right\}\subseteq N_\C=:E.
$$ 
\end{enumerate}
For instance, for $\PP^1$ we recover the first example since 
$\CT_{\PP^1}=\CO_{\PP^1}(D_{[1:0]}+D_{[0:1]})$. 
In fact, Examples (ii) and (iii) are connected via the general formula 
relating the filtrations of an equivariant reflexive sheaf with its dual sheaf. 
\end{example}

We can use the description of Example~\ref{ex-Klyachko} to calculate the
global sections of various toric sheaves using~\eqref{eq:poss.deg}.

\begin{example}
\label{ex-KlyachkoGlob}
For further use we consider the twisted tangent sheaf $\CT(d)$ over $\PP^2$. 
The Euler sequence immediately yields that $\Gamma(\PP^2,\CT_{\PP^2}(d))$ 
is a $h(d)=(d^2+6d+8)$-dimensional complex vector space if $d\geq-1$ and trivial otherwise. To derive this from a toric point of view, we let 
\begin{equation}
\label{eq:fan.pp2}
a=(1,0),\quad b=(0,1)\quad\mbox{and}\quad c=(-1,-1)
\end{equation}
denote the rays of the fan of $\PP^2$. 
The filtration is given by $F^\ell_\rho=\sum_{i+j=\ell}E^i_\rho\otimes T^j_\rho$ where $T^{\ell}_\rho$ and $E_\rho^\ell$ are the filtrations of the tangent sheaf $\CT_{\PP^2}$ and of the 
invertible sheaf $\CO(dD_a)$ respectively. In particular, 
\[
F_a^\ell=\left\{\begin{array}{cl}
\C^2& \mbox{if } \ell \leq d
\\
\spann(a) & \mbox{if } \ell =d+1.
\\
0 & \mbox{if } \ell\geq d+2.
\end{array}\right.
\qquad
F_{b,c}^\ell=\left\{\begin{array}{cl}
\C^2& \mbox{if } \ell \leq 0
\\
\spann(b),\spann(c) & \mbox{if } \ell=1.
\\
0 & \mbox{if } \ell\geq2.
\end{array}\right.
\]
Then $0\not=f\in\bigcap_{\rho\in\Sigma(1)}F^{-\langle r,\rho\rangle}$ 
implies the inequalities $r_1\geq-d-1$, $\,r_2\geq-1$, and $\,r_1+r_2\leq1$. 
Now the vertices of this polytope cannot give rise to nontrivial 
sections as $\spann(\rho)\cap\spann(\rho')=0$ for rays $\rho\not=\rho'$. 
The $3\cdot(d+2)$ lattice points in the facets span a one-dimensional space. On the other hand, the $(d+1)(d+2)/2$ interior lattice points span a two-dimensional space each. Hence $h(d)=3(d+2)+(d+2)(d+1)=d^2+6d+8$ in accordance with the Euler formula.
\end{example}

\subsection{Klyachko's description of morphisms between toric sheaves}
Next assume that $\CE$ and $\CF$ are two toric sheaves over $X=\toric(\Sigma)$ given by filtrations $E_\rho^\ell\subseteq E$ and $F_\rho^\ell\subseteq F$, $\rho\in\Sigma(1)$, respectively. The space of homomorphisms $\CE\to\CF$ is 
therefore graded over $M$,
$$
\gHom(\CE,\CF)=\bigoplus_{r\in M}\gHom_T(\CE,\CF[r])
\subseteq\gHom(E,F)\otimes\C[M].
$$
Here, $\gHom_T(\cdot\,,\cdot)$ denotes the $T$-equivariant morphisms,
and $\CF[r]$ is the toric sheaf $\CF$ with the new $T$-action obtained by 
\kbox{twisting with the character $\chi^r$}. In particular, an equivariant $\Phi\in\gHom_T(\CE,\CF)$ corresponds to a linear map $\phi\in\gHom(E,F)$ which satisfies $\phi(E^\ell_\rho)\subset F^\ell_\rho$ for all $\rho\in\Sigma(1)$ and $\ell\in\Z$. Since the filtration of $\CF[r]$ is given by
$$
\peeref{F[r]^\ell_\rho:= }
F_\rho^{\ell-\langle r,\rho\rangle} \subseteq F,\quad\rho\in\Sigma(1),
$$
an equivariant $\Phi\in\gHom_T(\CE,\CF[r])$ is given by $\phi\otimes\chi^r$ with
\begin{equation}
\label{eq:degree_r}
\phi(E_\rho^\ell)\subseteq \peeref{F[r]^\ell_\rho}\quad\mbox{ for all }\rho\in\Sigma(1)\mbox{ and }\ell\in\Z.
\end{equation}
A general homomorphism $\Phi\in\gHom(\CE,\CF)$ is the sum $\Phi=\sum_{r\in M}\Phi^r$ of {\em homogeneous homomorphisms of degree $r$} with $\Phi^r=\phi^{r}\otimes\chi^r$, where $\phi^{r}\in\gHom(E,F)$ is the {\em associated $\C$-linear map}.

\begin{example}\label{ex:mor.tor}
Let us compute a basis for $\gHom(\CO(1),\CT_{\PP^2})\cong\Gamma(\PP^2,\CT_{\PP^2}(-1))$ using the notation from Example~\ref{ex-KlyachkoGlob}. 
The invertible sheaf $\CO(1)=\CO(D_a)$ is given by
\[
E_a^\ell=\left\{\begin{array}{cl}
\C& \mbox{if } \ell \leq1
\\
0 & \mbox{if } \ell\geq2.
\end{array}\right.
\qquad
E_{\rho=b,c}^\ell=\left\{\begin{array}{cl}
\C& \mbox{if } \ell \leq 0
\\
0 & \mbox{if } \ell\geq1.
\end{array}\right.
\]
while we find
\[
T_{\rho=a,b,c}^\ell=\left\{\begin{array}{cl}
\C^2& \mbox{if } \ell \leq0,
\\
\spann(\rho) & \mbox{if } \ell =1.
\\
0 & \mbox{if } \ell\geq2
\end{array}\right.
\]
for the tangent sheaf. If
$0\not=\Phi^r=\phi^{r}\otimes\chi^r\in\gHom(\CO_{\PP^2}(1),\CT_{\PP^2})$ is
nontrivial, then $\phi^{r}(E^\ell_\rho)$ is nontrivial if $\ell-\langle
r,\rho\rangle\leq1$ for $\ell\leq1$ if $\rho=a$ and $\ell\leq0$ if $\rho=b$,
$c$. This is equivalent to the inequalities $r_1\geq0$, $r_1\geq-1$ and
$r_1+r_2\leq1$. Excluding the vertices of the resulting polytope we find the
same result as in Example~\ref{ex-KlyachkoGlob}.
\end{example}

\section{Toric Higgs sheaves}
\label{HiggsAlgebra}

\subsection{Toric pre-Higgs fields}
\label{Klyachko-Higgs}
We now focus on the case $\CF=\CE\otimes_{\CO_X}\CT_X$. 
We recall our convention from the introduction that we drop the qualifier ``co-'' although we tensor with $\CT_X$, not with $\Omega^1_X$. 

\begin{definition}
\label{def-pre-Higgs}
A {\em pre-Higgs field} $\Phi$ on $\CE$ is a morphism in 
$$
\gHom(\CE,\CE\otimes_{\CO_X}\CT_X)=\bigoplus_{r\in M}
\gHom_T(\CE,\CE\otimes_{\CO_X}\CT_X)[r].
$$ 
It is thus the direct sum $\Phi=\sum\Phi^r$ of 
$M$-homogeneous maps $\Phi^r:\CE\to\CE\otimes_{\CO_X}\CT_X$ of 
degree $r\in M$. The $\Phi^r$ 
are called {\em homogeneous pre-Higgs fields}. 
Writing $\Phi^r=\phi^{r}\otimes\chi^r$ 
we obtain the {\em associated $\C$-linear map} $\phi^{r}:E\to E\otimes N_\C$. The pair $(\CE,\Phi)$ is called a {\em toric pre-Higgs sheaf}.
\end{definition}

We need to analyse the filtrations $F_\rho^{\kbb}$ of 
$\CF=\CE\otimes\CT_X$ next. These fit into the following exact sequence:

\begin{lemma}
\label{lem:exact.sequence}
For $\rho\in\Sigma(1)$ the sequence 
$$
\xymatrix{
0\ar[r]&F_\rho^{\ell}\ar[r]& E_\rho^{\ell-1}\otimes N_\C \ar[r]^{\hspace{-35pt}\pi_\rho}&\big(E_\rho^{\ell-1}/E_\rho^{\ell}\big)\otimes\big(N_\C/\spann(\rho)\big)\ar[r]& 0,
}
$$
where $\pi_\rho$ denotes the natural projection, is exact. 
\begin{proof}
If $T^{\ell}_\rho$ denotes the filtration of the tangent sheaf 
(cf.\ Example~\ref{ex-Klyachko} (iii)), 
then $F_\rho^{\ell}=\sum_{i+j=\ell}E^i_\rho\otimes T^j_\rho$. Since $E^i_\rho\otimes T^j_\rho\subset E^{\ell}_\rho\otimes N_\C$ whenever $i=\ell-j$, $j\leq 0$, we have $F_\rho^{\ell}=E_\rho^{\ell}\otimes N_\C + E_\rho^{\ell-1}\otimes\spann(\rho)$. In particular, we have a natural injection $F^{\ell}_\rho\to E^{\ell-1}_\rho\otimes N_\C$. Clearly, $F^{\ell}_\rho\subseteq\ker\pi_\rho$; equality follows on dimensional grounds.
\end{proof}
\end{lemma}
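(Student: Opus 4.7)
My plan is to rely on Klyachko's description of the filtration of a tensor product of toric sheaves, together with the explicit filtration of $\CT_X$ from Example~\ref{ex-Klyachko}(iii). This immediately reduces the statement to an elementary computation in linear algebra that I would finish by a dimension count.

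First I would recall that, for a tensor product, $F_\rho^\ell=\sum_{i+j=\ell}E_\rho^i\otimes T_\rho^j$. Since the filtration $T_\rho^\kbb$ of $\CT_X$ only takes three non-redundant values, the sum collapses: the terms with $j\le 0$ all contribute into $E_\rho^\ell\otimes N_\C$ (using that $E_\rho^{\kbb}$ is \emph{decreasing}, so $E_\rho^{\ell-j}\subseteq E_\rho^\ell$ for $j<0$), the term $j=1$ contributes $E_\rho^{\ell-1}\otimes\spann(\rho)$, and the terms $j\ge 2$ vanish. Hence
$$
F_\rho^\ell \;=\; E_\rho^\ell\otimes N_\C \,+\, E_\rho^{\ell-1}\otimes\spann(\rho) \;\subseteq\; E_\rho^{\ell-1}\otimes N_\C,
$$
which establishes both the injection on the left and, since each of the two summands visibly maps to zero under one of the two quotient maps defining $\pi_\rho$, the inclusion $F_\rho^\ell\subseteq\ker\pi_\rho$. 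Surjectivity of $\pi_\rho$ is automatic as it is the tensor product of two surjections.

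The only remaining point is the reverse inclusion $\ker\pi_\rho\subseteq F_\rho^\ell$, which I would verify by comparing dimensions. Writing $q=\dim_\C N_\C$, a direct count gives
$$
\dim\ker\pi_\rho \;=\; q\cdot\dim E_\rho^{\ell-1} \,-\, (q-1)\bigl(\dim E_\rho^{\ell-1}-\dim E_\rho^\ell\bigr) \;=\; \dim E_\rho^{\ell-1}+(q-1)\dim E_\rho^\ell.
$$
On the other hand, inclusion--exclusion applied to $F_\rho^\ell=(E_\rho^\ell\otimes N_\C)+(E_\rho^{\ell-1}\otimes\spann(\rho))$, whose intersection is $E_\rho^\ell\otimes\spann(\rho)$, yields the same number. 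This forces equality and closes the argument.

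I do not anticipate a genuine obstacle here; the only subtlety worth flagging is the book-keeping with the decreasing filtration (ensuring that $j<0$ summands are absorbed into the $j=0$ one), after which the content of the lemma is really just the exactness of the standard tensor-product sequence of quotients applied to the pair $(E_\rho^\ell\subseteq E_\rho^{\ell-1},\,\spann(\rho)\subseteq N_\C)$.
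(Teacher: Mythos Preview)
Your proof is correct and follows essentially the same route as the paper's: both identify $F_\rho^\ell = E_\rho^\ell\otimes N_\C + E_\rho^{\ell-1}\otimes\spann(\rho)$ from the tensor-product filtration and the explicit form of $T_\rho^\kbb$, observe the inclusion into $\ker\pi_\rho$, and conclude equality by a dimension count. You have merely spelled out the dimension computation and the surjectivity of $\pi_\rho$ where the paper leaves these implicit.
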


Given a map $\phi:E\to E\otimes N_\C$ it will be convenient to consider the 
contraction of $\phi$ by $s\in M$, namely 
$$
\kbox{$\cohikks_s
:= \langle s,\phi\rangle\peeref{:=(\id_E\otimes s)\circ\phi}\in\End(E)$.}
$$
\peeref{Here, $s\in M$ is understood as a $\C$-linear map
$N_\C\to\C$. Further, we sometimes
regard $\phi$ as a $\Z$-linear map $M\to\gEnd(E)$, $s\mapsto\cohikks_s$.}

\begin{theorem}
\label{th-describeHiggs}
A $\C$-linear map $\phi:E \to E\otimes N_\C$ induces a homogeneous pre-Higgs field of degree $r$ if and only if the associated contractions satisfy 
$$
\cohikks_s(E_\rho^\ell)\subseteq
\left\{\begin{array}{ll}E_\rho^{\ell-\langle r,\rho\rangle}&
\mbox{if }s\in\rho^\perp\\
E_\rho^{\ell-1-\langle r,\rho\rangle}&
\mbox{if }s\not\in\rho^\perp\end{array}\right\}
\subseteq E_\rho^{\ell-1-\langle r,\rho\rangle}
$$
for all $s\in M$, $\rho\in\Sigma(1)$ and $\ell\in\Z$.
\end{theorem}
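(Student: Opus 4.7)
The plan is to read the claim as a translation of the equivariance condition \eqref{eq:degree_r} with $\CF=\CE\otimes\CT_X$ into contraction data, using the exact sequence from Lemma~\ref{lem:exact.sequence} to decompose membership in $F_\rho^\bullet$ into a ``bulk'' test inside $E_\rho^{\bullet-1}\otimes N_\C$ plus a ``transverse'' test modulo $\spann(\rho)$.

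First I would write \eqref{eq:degree_r} for $\Phi^r = \phi\otimes\chi^r$ as the condition $\phi(E_\rho^\ell)\subseteq F_\rho^{\ell-\langle r,\rho\rangle}$ for every $\rho\in\Sigma(1)$ and every $\ell\in\Z$. Lemma~\ref{lem:exact.sequence} identifies $F_\rho^{\ell-\langle r,\rho\rangle}$ as the kernel of $\pi_\rho$ inside $E_\rho^{\ell-1-\langle r,\rho\rangle}\otimes N_\C$, so that the condition splits into two parts:
\begin{itemize}
\item[(a)] $\phi(E_\rho^\ell)\subseteq E_\rho^{\ell-1-\langle r,\rho\rangle}\otimes N_\C$, and
\item[(b)] $\pi_\rho\bigl(\phi(E_\rho^\ell)\bigr)=0$ in $\bigl(E_\rho^{\ell-1-\langle r,\rho\rangle}/E_\rho^{\ell-\langle r,\rho\rangle}\bigr)\otimes(N_\C/\spann(\rho))$.
\end{itemize}

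Next I would turn both conditions into contraction tests against lattice elements $s\in M$. Since $M\subset M_\C=N_\C^\vee$ is a lattice of full rank, membership of $\phi(e)$ in the $\C$-subspace $E_\rho^{\ell-1-\langle r,\rho\rangle}\otimes N_\C$ is equivalent to $\cohikks_s(e)\in E_\rho^{\ell-1-\langle r,\rho\rangle}$ for every $s\in M$, which is exactly the second case of the inclusion claimed in the theorem (the first case being stronger, by monotonicity of the filtration). Because $\rho$ is primitive, $\rho^\perp\cap M$ has full rank inside $\rho^\perp\subset M_\C$, so any $s\in\rho^\perp$ factors through the quotient $N_\C/\spann(\rho)$; consequently (b) is equivalent to $\cohikks_s(e)\in E_\rho^{\ell-\langle r,\rho\rangle}$ for all such $e$ and all $s\in\rho^\perp$, which is the first case of the stated inclusion. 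The two conditions taken together therefore amount precisely to the contraction inequalities in the theorem.

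For the converse I would simply reverse this chain: the hypothesis for $s\notin\rho^\perp$, combined with the chain $E_\rho^{\ell-\langle r,\rho\rangle}\subseteq E_\rho^{\ell-1-\langle r,\rho\rangle}$ for $s\in\rho^\perp$, yields (a), while the hypothesis for $s\in\rho^\perp$ yields (b); hence $\phi(E_\rho^\ell)\subseteq F_\rho^{\ell-\langle r,\rho\rangle}$, and $\phi\otimes\chi^r$ is a homogeneous pre-Higgs field of degree $r$ by \eqref{eq:degree_r}. The only point requiring attention is the passage between tests against $s\in M$ and the actual $N_\C$-valued image of $\phi$; this is resolved cleanly by picking a $\Z$-basis of $M$ adapted to the splitting $\rho^\perp\oplus\Z\cdot s_0$ with $\langle s_0,\rho\rangle=1$, after which the proof is a direct unravelling of Lemma~\ref{lem:exact.sequence}.
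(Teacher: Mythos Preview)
Your proposal is correct and follows essentially the same approach as the paper: both use Lemma~\ref{lem:exact.sequence} to split the condition $\phi(E_\rho^\ell)\subseteq F_\rho^{\ell-\langle r,\rho\rangle}$ into the ``bulk'' inclusion $\phi(E_\rho^\ell)\subseteq E_\rho^{\ell-1-\langle r,\rho\rangle}\otimes N_\C$ (tested against all $s\in M$) and the vanishing of $\pi_\rho\circ\phi$ (tested against $s\in\rho^\perp$). Your write-up is in fact more explicit about the dualities involved and more careful with the index shifts than the paper's terse argument.
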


\begin{proof}
If $\phi$ is $\C$-linear, then 
$\phi(E^l_\rho)\subset F^{l-\langle r,\rho\rangle}=
E_\rho^{\ell}\otimes N_\C + E_\rho^{\ell-1}\otimes\spann(\rho)$. 
Hence this is a necessary condition. 
Conversely, we already know that 
$\phi(E^{\ell}_\rho)\subseteq E^{l-1}_\rho\otimes N$ for 
$\cohikks_s(E^{\ell}_\rho)\subseteq E^{\ell-1-\langle r,\rho\rangle}_\rho$. 
To show that the image actually lies in 
$F^{\ell-\langle r,\rho\rangle}_\rho\otimes N$ we need to prove that 
$\pi_\rho\circ\phi=0$. Considering this as a map 
$(N_\C/\spann(\rho))^*=\rho^\perp\to E^{\ell-1-\langle r,\rho\rangle}/
E^{\ell-\langle r,\rho\rangle}$ this must vanish. 
Since $\cohikks_s(E^{\ell}_\rho)=\langle s,\phi(E^{\ell}_\rho)\rangle$ 
this holds by assumption.
\end{proof}

\begin{remark}\label{rem:NilHiggs}
\peeref{In the same vein we can analyse $\CO_X$-linear maps 
$\Psi:\CE\to\CE\otimes_{\CO_X}\Omega^1_X$ with 
$\CT_X$ replaced by $\Omega^1_X$. 
Here, a homogeneous morphism $\Psi:\CE\to\CE\otimes_{\CO_X}\Omega^1_X$ 
of degree $r$ corresponds to a $\C$-linear map $\psi:E \to E\otimes_\C M_\C$
satisfying $\psi_b(E_a^{\ell})\subseteq E_a^{\ell-\langle r,a\rangle+\delta_{ab}}$
for all $a,\,b\in\Sigma(1)$ and $\ell\in\Z$, where 
$\delta_{ab}=1$ if and only if $a=b$. 
Therefore, the difference between using $\Omega^1$ (as one does for 
usual Higgs fields) and $\CT_X$ is just the sign in $\pm\delta_{ab}$. 

\medskip

However, taking $\Omega^1$ instead of $\CT_X$ implies that
the associated $\C$-linear map $\psi:E\to E\otimes M_\C$ 
gives rise to \kbox{nilpotent endomorphisms $\psi_a$}, $a\in\Sigma(1)$, if 
$r=0$ or $r\in M\setminus |\Sigma|^\vee$ (which is always the case if $X$ is 
complete). Indeed, for $r\in M\setminus |\Sigma|^\vee$ there is a $b\in\Sigma(1)$ 
such that $\langle r,b\rangle<0$, whence 
$\psi_a(E_b^\ell)\subseteq E_{b}^{\ell-\langle r,b\rangle}\subseteq E_{b}^{\ell+1}$. 
It follows, for instance, by a direct computation that 
the tangent sheaf $\CE:=\CT_{\PP^2}$ does not 
admit any nontrivial toric pre-Higgs field 
$\Psi:\CT_{\P^2}\to\CT_{\P^2}\otimes_{\CO_{\P^2}}\Omega^1_{\P^2}$ in 
striking contrast to the case of pre-co-Higgs fields 
$\Phi:\CT_{\P^2}\to\CT_{\P^2}\otimes_{\CO_{\P^2}}\CT_{\P^2}$.}
\end{remark}
\subsection{Toric Higgs fields}
\label{subsec:TorHiggs}
We return to our Convention~(\ref{noCo}) and come to the central definition 
of this paper. 
It introduces the equivariant versions of Definition~\ref{def:Higgs}.

\begin{definition}
\label{def-higgs}
\hfill
\begin{enumerate}
\item[(i)] A {\em toric Higgs sheaf} $(\CE,\Phi)$ consists of a toric sheaf $\CE$ over the toric variety 
$X=\toric(\Sigma)$, and an arbitrary, not necessarily homogeneous pre-Higgs field $\Phi:\CE\to\CE\otimes\CT_X$ which satisfies $\Phi\wedge\Phi=0$, cf.~\eqref{eq:def.wedge}. We refer to $\Phi$ as the {\em Higgs field} of the underlying {\em toric sheaf} $\CE$.
\item[(ii)] A {\em homogeneous Higgs sheaf} $(\CE,\Phi)$ is a toric Higgs sheaf with homogeneous Higgs field $\Phi$ of given degree $r\in M$. It corresponds to a $\C$-linear map $\phi:E\to E\otimes M_C$. 
\end{enumerate}
\peeref{By a slight abuse of notation} we speak of {\em Higgs bundles} instead of Higgs sheaves if $\CE$ is actually locally free.
\end{definition}

\begin{remark}
\label{rem:com.inher}
The condition $\Phi\wedge\Phi=0$ is not inherited by the homogeneous
components of $\Phi$, so that $\Phi=\sum\Phi^r$ is merely a
decomposition into homogeneous pre-Higgs fields. On the other hand, the
direct sum of homogeneous Higgs fields $\Phi=\sum\Phi^r$ is not
necessarily a Higgs field \peeref{either} as it might not satisfy
$\Phi\wedge\Phi=0$.
\end{remark}
\subsection{The Higgs algebra}
\label{subsec:HiggsAlg}
A pre-Higgs field can be considered as an element of 
$\gEnd(E)\otimes N_\C\otimes \C[M]$ via $\Phi=\sum_{r\in M}\Phi^r=\sum_{r\in M}\phi^{r}\otimes\chi^r$. In particular, we can contract $\Phi$ with $s\in M$ and $t\in T$ to obtain
$$
\coHikks_s:=\langle s,\Phi\rangle=
\sum_{r\in M}\cohikks^{r}_s\otimes\chi^r\in\gEnd(E)\otimes\C[M]\quad\mbox{and}\quad\coHikks_s(t)\in\gEnd(E).
$$
The condition $\Phi\wedge\Phi=0$ translates into

\begin{proposition}
\label{prop:AlgT}
For any $s$, $s'\in M$ we have $[\coHikks_s,\coHikks_{s'}]=0$ in 
$\gEnd(E)\otimes\C[M]$. 
In particular, every Higgs field defines a family
$$
\textstyle
\CA(t)=\C\Big[\coHikks_s(t)=
\sum_{r\in M}\chi^r(t)\,\cohikks^{r}_s\kSt s\in M\Big]\subset \gEnd(E),\quad t\in T
$$
of (commutative) finitely generated subalgebras with $\id_E$ as unit. 
\end{proposition}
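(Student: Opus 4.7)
The plan is to reduce the integrability condition $\coHikks\wedge\coHikks=0$ to a commutator identity by pairing with characters. First, pass to the fibre $E=\CE(1)$ and expand $\coHikks=\sum_r\cohikks^r\otimes\chi^r$, viewing $\coHikks$ as a formal element of $\gHom(E,E\otimes N_\C)\otimes\C[M]$. The composition in (\ref{eq:def.wedge}) then yields a bihomogeneous expansion
$$
\coHikks\wedge\coHikks \;=\; \sum_{r,r'\in M}(\cohikks^{r'}\wedge\cohikks^r)\otimes\chi^{r+r'}
$$
in $\gHom(E,E\otimes\Lambda^2 N_\C)\otimes\C[M]$, where the inner wedge is the antisymmetrization of the two tangent factors produced by the composition.

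The key step is to pair this element with an arbitrary $s\wedge s'\in\Lambda^2 M$, treating $s,s'\in M$ as linear functionals on $N_\C$. A direct bookkeeping of the three arrows in (\ref{eq:def.wedge}) -- unpack $\cohikks^r(e)=\sum_i e^r_i\otimes n^r_i$, then $\cohikks^{r'}(e^r_i)=\sum_j e^{rr'}_{ij}\otimes m^{rr'}_{ij}$, and evaluate $(s\wedge s')$ on the resulting $m^{rr'}_{ij}\wedge n^r_i$ -- reproduces, after a single relabelling $r\leftrightarrow r'$, exactly the two terms of
$$
[\coHikks_s,\coHikks_{s'}] \;=\; \sum_{r,r'}\bigl(\cohikks^{r}_s\cohikks^{r'}_{s'}-\cohikks^{r}_{s'}\cohikks^{r'}_s\bigr)\otimes\chi^{r+r'}
$$
in $\gEnd(E)\otimes\C[M]$. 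Since the natural pairing $\Lambda^2 M_\C\times\Lambda^2 N_\C\to\C$ is perfect, $\coHikks\wedge\coHikks=0$ is equivalent to the vanishing of $[\coHikks_s,\coHikks_{s'}]$ for every $s,s'\in M$, which is the first assertion.

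For the second assertion I would specialize at $t\in T=\Spec\C[M]$ via the $\C$-algebra homomorphism $\gEnd(E)\otimes\C[M]\to\gEnd(E)$ induced by $\chi^r\mapsto\chi^r(t)$. This transports $[\coHikks_s,\coHikks_{s'}]=0$ to $[\coHikks_s(t),\coHikks_{s'}(t)]=0$ in $\gEnd(E)$, so each $\CA(t)$ is commutative. Finite generation follows from the $\Z$-linearity of the assignment $s\mapsto\coHikks_s(t)$: any $\Z$-basis $s_1,\dots,s_q$ of $M$ furnishes finitely many $\C$-algebra generators $\coHikks_{s_1}(t),\dots,\coHikks_{s_q}(t)$ for $\CA(t)$, with $\id_E$ serving as unit by convention.

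I expect the only substantive obstacle to lie in the index and sign bookkeeping of the middle paragraph, that is, in pinning down precisely which antisymmetrization of the two tangent factors produced by $\Phi\otimes\id$ corresponds to the commutator; once the contraction identity is secured, both conclusions are formal.
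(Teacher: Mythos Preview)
Your argument is correct and is essentially the same as the paper's: both reduce $\Phi\wedge\Phi=0$ to the vanishing of commutators by exploiting that the antisymmetrization of the two tangent factors produces $[\,\cdot\,,\,\cdot\,]$. The only cosmetic difference is that the paper expands $\Phi$ in a local frame $\{\nu_i\}$ of $\CT_X$ (stressing that the computation is not specifically toric) and reads off $[\Phi_i,\Phi_j]$ as the coefficient of $\nu_i\wedge\nu_j$, whereas you contract directly with $s\wedge s'\in\Lambda^2 M$; these are dual bookkeeping schemes for the same identity.
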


\begin{proof}
This is actually a non-toric property in the following sense. 
Consider a general Higgs field $\Phi$ as in Definition~\ref{def:Higgs}. 
Locally, we can fix a base of vector fields 
$\{\nu_i\kst i\in I\}\subset\CT_X$ 
over $U$ and write 
$\Phi|_U=\sum_{i\in I}\coHikks_i\otimes\nu_i$ with 
$\coHikks_i\in\Gamma(U,\lEnd(\CE))$. 
It follows that $[\coHikks_i,\coHikks_j]=0$ for all $i$, $j\in I$, 
for $0=\Phi\wedge\Phi=\sum_{i<j}[\coHikks_i,\coHikks_j]\,
\nu_i\wedge\nu_j$. 
In fact, for every local section $\omega\in\Omega^1_X$ we can evaluate so that
$\nu_i(\omega)\in\CO_X$. We end up with a set of 
\kbox{commuting endomorphisms} 
$\coHikks_\omega=\sum_{i\in I}\nu_i(\omega)\Phi_i\in\Gamma(U,\lEnd(\CE))$. 
In the toric case where $U=T$ and $\Gamma(U,\lEnd(\CE))=E\otimes\C[M]$, 
local toric sections $\nu_i$ of $\CT_X$ correspond to elements $n_i\in N$, 
and contracting with $s$ yields 
$\coHikks_s=\sum_{r\in M}\cohikks^{r}_s\otimes\chi^r$.
\end{proof}

\begin{definition}
We call the finitely generated, commutative
$\C[M]$-subalgebra 
$$
\textstyle
\CA=\CA(\Phi):=\C[M]\big[\coHikks_s=
\sum_{r\in M}\cohikks^{r}_s\otimes \chi^r\kSt s\in M\big]
$$
the \kbox{{\em Higgs algebra}} with $\id_E\otimes\chi^0$ as unit element.
\end{definition}

\begin{example}
\label{ex-cHAlgPOne}
\peeref{For $X=\PP^1$, $M=\Z$ and there are two primitive rays $\rho_0=1$ and $\rho_\infty=-1$.
A co-Higgs field $\coHikks\in\C[M]=\C[x,x^{-1}]$ therefore looks like
$\coHikks=c_0+c_1\,x + c_{-1}\,x^{-1}$, whence
$\coHikks_t=c_0+c_1\,t+c_{-1}\,t^{-1}\in\C$ for $t\in T=\C^*$. 
Their minimal polynomials
$m_\coHikks(z)\in\C[x,x^{-1}][z]$ and
$m_{\coHikks_t}(z)\in\C[z]$ equal $m_\coHikks(z)=z-\coHikks$ and
$m_{\coHikks_t}(z)=z-\coHikks_t$, respectively. This fits with
$$
\cHAlg(\coHikks)=\C[x,x^{-1}][\coHikks]=\C[x,x^{-1}]=
\C[x,x^{-1}][z]/m_\coHikks(z),
$$
and similarily for $\cHAlg_t$.}

\medskip

\peeref{Further examples will be discussed in Subsection~\eqref{subsec:HiggsPolyDP}.}
\end{example}

For each $t\in T$, we obtain a surjection
$\CA\surj\CA(t)$ within
$\gEnd(E)\otimes\C[M]\surj\gEnd(E)$.
We obtain the following commutative diagram where only the rightmost
column is non-commutative:
$$
\xymatrix@R5ex{
\coHikks_s \ar@{|->}[dd] &
\C[M] \ar@{->>}[d]_{t\in T} \ar@{^(->}[r]^-{a} &
\CA\; \ar@{^(->}[r]\ar@{^(->}[r]^-{\psi} \ar@{->>}[d]^-{}&
\gEnd(E)\otimes\C[M] \ar@{->>}[d]^-{}
\\
&
\C \ar@{^(->}[r] \ar@{=}[d]&
\CA\otimes_t\C\; \ar[r]^-{\psi_t} \ar@{->>}[d]^-{}&
\big(\gEnd(E)\otimes\C[M]\big)\otimes_t\C \ar@{=}[d]
\\
\coHikks_s(t) &
\C \ar@{^(->}[r] &
\CA(t)\; \ar@{^(->}[r] &
\gEnd(E)
}
$$
The injectivity of $\psi_t$ is equivalent to $\CA\otimes_t\C\to\CA(t)$ being an isomorphism which corresponds to the flatness of $a$.

\begin{remark}
\label{rem-commAlg}
The construction from the proof of Proposition~\ref{prop:AlgT} yields in the 
non-toric setting a sheaf of commutative
subalgebras of $\lEnd(\CE)$. The associated relative spectrum $\til{X}\to X$
relates to the spectral variety corresponding with the Higgs sheaf, 
cf.~\cite[Section 6]{si94II} and Subsection~\eqref{subsec:HiggsPolyDP} 
for an example in the toric setting. Note, however, that in contrast to the latter one,
our algebra involves the minimal polynomial instead of the characteristic one. 
Implicitely, we are using the isospectral decomposition of $E$ via characters 
providing the eigenvalues. 
Since there might be summands of dimension $>1$, this obstructs 
the construction of an honest fibration over $X$ (the spectral
variety). In the toric case it would be interesting to see how this sheaf 
relates to the algebra $\CA$ which is just the restriction to $T$. 
\end{remark}

\section{Combinatorial invariants}
\label{sec:CombInv}

\subsection{The Higgs polytope}
\label{sec:HiggsPoly}
For a given toric pre-Higgs sheaf $(\CE,\Phi)$ we can define a 
combinatorial invariant as follows.  
Let $\supp\Phi=\{r\in M\kst \Phi^r=\phi^{r}\otimes\chi^r\neq 0\}$ 
be the {\em support} of the pre-Higgs field $\Phi$.

\begin{definition}
\label{def:co.higgs.poly}
The convex lattice polytope in $M_\R$ defined by
$$
\nabla(\Phi):=\conv\supp\Phi,
$$
is called the {\em Higgs polytope} of $(\CE,\Phi)$. 
\end{definition}

This combinatorial invariant does heavily depend on the toric data. 
Whenever $\nabla'\subseteq\nabla$ is a subpolytope, e.g.,
$\nabla'=\{r\}$ for a single $r\in\nabla\cap M$, then we define the
restriction of a pre-Higgs field $\Phi$ to $\nabla'$ by
$$
\Phi|_{\nabla'}:=\sum_{r\in \nabla'} \Phi^r.
$$
Obviously, this defines again a pre-Higgs field.
In contrast, even for an honest Higgs field $\Phi$, 
the restriction $\Phi|_{\nabla'}$ is merely a pre-Higgs field in general, 
for it does not need to satisfy $\Phi|_{\nabla'}\wedge \Phi|_{\nabla'}=0$, cf.\ Remark~\ref{rem:com.inher}. However, we have the following

\begin{proposition}
\label{prop:faces.Higgspoly}
Let $\Phi$ be a Higgs field and $F\leq\nabla(\Phi)$ be a face. 
Then the restriction $\Phi|_F$ is a Higgs field, too.
In particular, the $\C$-linear pre-Higgs field $\phi^v$ arising from a 
\kbox{vertex $v\in\nabla(\Phi)$} via 
$\Phi|_v=\Phi^v=\cohikks^{v}\otimes\chi^v$ 
is an honest $\C$-linear Higgs field of degree $v$.
\end{proposition}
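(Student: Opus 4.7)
The plan is to combine the $M$-grading of the integrability condition $\Phi\wedge\Phi=0$ with the supporting-hyperplane characterisation of a face. Writing $\Phi=\sum_{r\in\supp\Phi}\Phi^r$ and expanding \eqref{eq:def.wedge}, I would first observe that
$$
\Phi\wedge\Phi \;=\; \sum_{u\in M}\;\sum_{\substack{r,r'\in\supp\Phi\\ r+r'=u}}\Phi^r\wedge\Phi^{r'},
$$
where each inner sum is $M$-homogeneous of degree $u$. Hence $\Phi\wedge\Phi=0$ decomposes into the family of relations
$$
\sum_{\substack{r,r'\in\supp\Phi\\ r+r'=u}}\Phi^r\wedge\Phi^{r'}\;=\;0,\qquad u\in M. \qquad (\ast)
$$

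Next, I would choose a linear functional $\lambda\in N_\R$ supporting $F$ on $\nabla(\Phi)$, so that with $c:=\max_{\nabla(\Phi)}\lambda$ one has $F=\nabla(\Phi)\cap\{\lambda=c\}$. The key convex-geometric input is that for any pair $r,r'\in\supp\Phi\subseteq\nabla(\Phi)$ one has $\lambda(r+r')\leq 2c$, with equality if and only if $r,r'\in F$. Splitting $\Phi|_F\wedge\Phi|_F$ by the total degree $u$ then gives three cases: when $\lambda(u)=2c$, the set of pairs $(r,r')\in(F\cap M)^2$ with $r+r'=u$ coincides with the set of all pairs in $\supp\Phi\times\supp\Phi$ with $r+r'=u$, so the partial sum equals $(\ast)$ and vanishes; when $\lambda(u)\neq 2c$, the restricted pair-set is empty. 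Summing over $u$ gives $\Phi|_F\wedge\Phi|_F=0$, so $\Phi|_F$ is a Higgs field.

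The vertex statement is the specialisation $F=\{v\}$. In that case $F\cap M=\{v\}$, so $\Phi|_F=\Phi^v=\cohikks^{v}\otimes\chi^v$, and the argument above yields $\Phi^v\wedge\Phi^v=0$. By Definition~\ref{def-higgs}(ii), this is precisely the statement that the associated $\C$-linear map $\cohikks^v$ is an honest Higgs field of degree $v$.

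The argument is essentially bookkeeping; the only non-trivial ingredient is the elementary fact that a proper face of a convex polytope cannot contain the midpoint of a segment whose endpoints lie in the polytope but outside the face. Consequently I do not expect any substantive obstacle; the real work lies in aligning the $M$-homogeneous decomposition of $(\ast)$ with the indexation of pairs by their total degree.
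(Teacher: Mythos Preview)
Your argument is correct and follows essentially the same route as the paper: choose a linear functional supporting the face, expand $\Phi\wedge\Phi$ into $M$-homogeneous pieces indexed by the total degree $r+r'$, and observe that the extremal pieces involve only pairs from $F\times F$. The paper phrases this with an integral $a\in N$ and $\min$ rather than $\max$, and compresses your three cases into the single line $\Phi|_F\wedge\Phi|_F=(\Phi\wedge\Phi)|_{F\times F}=0$, but the content is identical.
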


\begin{proof}
Let $a\in N$ be an integral vector defining the face $F$, i.e., 
$F=\{r\in M_\R\kst \langle r,a\rangle = 
\min \langle \nabla(\Phi),a\rangle\}$. 
From $\Phi=\sum_{r\in\nabla(\Phi)}\Phi^r$ we obtain 
$\Phi\wedge\Phi= 
\sum_{r,\,s\in \nabla(\Phi)}\Phi^r\wedge\Phi^s$ 
where the $(r,s)$-summand has degree $r+s\in M$.
Contracting the $M$-degrees via the linear map 
$\langle\kbb\,,a\rangle:M\to\Z$ exhibits the pairs 
$(r,s)\in F\times F$  exactly as those with minimal $\Z$-degree. 
Thus, $\Phi|_F\wedge\Phi|_F=(\Phi\wedge\Phi)|_{F\times F}=0$.
\end{proof}

\peeref{Subsections~\eqref{noPre} and~\eqref{subsec:HiggsPolyDP} will provide explicit examples of Higgs polytopes.}

\subsection{The Higgs range}
\label{sec:HiggsRange}
In order to see what kind of polytopes can arise for a given toric sheaf $\CE$, we call $r\in M$ {\em admissible for} $\CE$, if there exists a homogeneous pre-Higgs field $\Phi$ of degree $r$.

\begin{definition}\label{def-HiggsRange}
Let $\CE$ be a toric sheaf. The {\em Higgs range of} $\CE$ is the convex hull $\HR(\CE)$ in $M_\R$ defined by the admissible points. Moreover, for any $r\in\HR(\CE)$ we let $V_r(\CE)$ denote the complex vector space of maps $\phi:E\to E\otimes N_\C$ which are associated to some homogeneous pre-Higgs field $\Phi$ of degree $r$. 
\end{definition}

\peeref{We often think of $V_r(\CE)$ as a kind of multiplicity of the lattice point $r\in\HR(\CE)$.}

\medskip

From Proposition~\ref{prop:faces.Higgspoly} it follows that
the Higgs polytope $\nabla(\Phi)$ of every toric pre-Higgs field $\Phi$ on
$\CE$ has to be contained in $\HR(\CE)$. Moreover, $\HR(\CE)$
is a polytope itself by Proposition~\ref{prop-rangeBounded}, hence
there exists a maximal toric pre-Higgs field $\Phi$ satisfying
$\nabla(\Phi)=\HR(\CE)$. It is an immediate question
whether one can even find a true Higgs field $\Phi$ with this property.
The answer seems to be ``no'' or at least non-trivial
as it is indicated from the example of Subsection (\ref{toddler}).
Even more elementary is the question whether every admissible $r\in M$ 
does always allow a true homogeneous Higgs field $\Phi^r$ of degree $r$.

\medskip

\begin{proposition}\label{prop-rangeBounded}
Let $\CE$ be a toric sheaf over a complete toric variety. Then the Higgs range $\HR(\CE)$ is bounded, that is, it is a (possibly empty) convex polytope.
\end{proposition}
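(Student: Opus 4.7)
The plan is to extract from Theorem~\ref{th-describeHiggs} a finite list of linear lower bounds on $\langle r,\rho\rangle$, one per ray $\rho$ of the fan, that any admissible degree $r\in M$ must satisfy. Since completeness of $X$ forces the primitive ray generators to positively span $N_\R$, any polyhedron cut out by one lower bound per ray is automatically bounded, and $\HR(\CE)$---being the convex hull of lattice points sitting inside such a polyhedron---will then be a convex polytope.

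First, for each ray $\rho\in\Sigma(1)$ I would fix the integers
\[
\alpha_\rho=\max\{\ell\in\Z\kst E_\rho^\ell=E\}\quad\text{and}\quad\beta_\rho=\max\{\ell\in\Z\kst E_\rho^\ell\neq 0\},
\]
which exist because the decreasing filtration $E_\rho^\bullet$ stabilises. Let $r\in M$ be admissible, witnessed by a nonzero $\C$-linear map $\phi:E\to E\otimes N_\C$ associated to some homogeneous pre-Higgs field of degree $r$, and pick $e\in E$ with $\phi(e)\neq 0$. Write $\ell_\rho(e)=\max\{\ell\kst e\in E_\rho^\ell\}\in[\alpha_\rho,\beta_\rho]$. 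Applying Theorem~\ref{th-describeHiggs} in its weaker form gives $\cohikks_s(e)\in E_\rho^{\ell_\rho(e)-1-\langle r,\rho\rangle}$ for every $s\in M$. Since $s\mapsto\cohikks_s(e)$ is linear and $M$ spans $M_\C$ over $\C$, varying $s$ over a basis dual to one of $N_\C$ realises every component of $\phi(e)\in E\otimes N_\C$ as such a contraction; consequently $\phi(e)\in E_\rho^{\ell_\rho(e)-1-\langle r,\rho\rangle}\otimes N_\C$. Non-vanishing of $\phi(e)$ then forces $\ell_\rho(e)-1-\langle r,\rho\rangle\leq\beta_\rho$, so that
\[
\langle r,\rho\rangle\;\geq\;\ell_\rho(e)-\beta_\rho-1\;\geq\;\alpha_\rho-\beta_\rho-1.
\]

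Since this inequality holds at every ray, one obtains the inclusion
\[
\HR(\CE)\;\subseteq\;\{r\in M_\R\kst\langle r,\rho\rangle\geq\alpha_\rho-\beta_\rho-1\text{ for all }\rho\in\Sigma(1)\}.
\]
To finish I would invoke $|\Sigma|=N_\R$: if the right-hand polyhedron contained points of arbitrarily large norm, normalising and passing to a subsequential limit would produce a nonzero $u\in M_\R$ with $\langle u,\rho\rangle\geq 0$ for every primitive ray generator, hence $u\in|\Sigma|^\vee=\{0\}$, a contradiction. So the cutout polyhedron is bounded and with it $\HR(\CE)$.

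The genuine content of the argument is the bookkeeping step of translating the Klyachko-type constraints on the endomorphisms $\cohikks_s$ into the cleaner inclusion $\phi(e)\in E_\rho^{\ell_\rho(e)-1-\langle r,\rho\rangle}\otimes N_\C$; after that, boundedness is an essentially formal consequence of completeness of $X$.
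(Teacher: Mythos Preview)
Your argument is correct and follows essentially the same route as the paper: from the containment $\cohikks^{r}_s(E_\rho^{\ell})\subseteq E_\rho^{\ell-1-\langle r,\rho\rangle}$ of Theorem~\ref{th-describeHiggs} you extract one linear lower bound on $\langle r,\rho\rangle$ per ray and then invoke completeness to bound the resulting polyhedron. The only cosmetic difference is that the paper uses a single uniform filtration length $N$ and applies the containment at $\ell=\ell(\rho)$ (where $E_\rho^{\ell(\rho)}=E$) to kill $\cohikks^{r}_s$ outright, whereas you track per-ray indices $\alpha_\rho,\beta_\rho$ and a witness element $e$; your bound $\langle r,\rho\rangle\geq\alpha_\rho-\beta_\rho-1$ is in fact slightly sharper than the paper's $\langle r,\rho\rangle\geq -N$.
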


\begin{proof}
Recall from Theorem~\ref{th-describeHiggs} that
for all $\ell\in\Z$, we have at least
\begin{equation}\label{eq:HiggsCon}
\cohikks^{r}_s(E_\rho^{\ell})\subseteq
E_\rho^{\ell-1-\langle r,\rho\rangle}.
 \end{equation}
Denote by $N\in\N$ the maximum length of the filtrations
$E_\rho^\kbb$ for $\rho\in\Sigma(1)$. Then for each $\rho$
there exists an index $\ell(\rho)$ such that
$E_\rho^{\ell(\rho)}=E$, but $E_\rho^{\ell(\rho)+N}=0$.
In particular,
$$
\HR(\CE)\subseteq\{r\in M_\R\kst \langle r,\rho\rangle\geq -N
\;\mbox{for all }\rho\in\Sigma(1)\},
$$
where the latter set is bounded by completeness. Indeed, if one of these inequalities is violated,
say $\langle r,\rho\rangle< -N$,
then $-1-\langle r,\rho\rangle\geq N$, 
so that $\cohikks^{r}_s=0$ for every $s\in M$ by~\eqref{eq:HiggsCon}, and thus $\cohikks^{r}=0$.
\end{proof}

\peeref{
\begin{remark}
\label{rem:chernRoot}
In Remark~\ref{rem:klya.comp} we identified the jump loci
$\ku{u}(\sigma)\subset M/(M\cap\sigma^\bot)$ 
of the filtrations $E_\rho^\kbb$ as the equivariant Chern roots of
$\CE$. The proof of Proposition~\ref{prop-rangeBounded} then shows 
that the size of the Higgs range $\HR(\CE)$ is controlled 
by the stretching of these roots.
\end{remark}
}

\begin{example}\label{ex-rangeBoundedTang}
Recall from Example~\ref{ex-Klyachko}\,(iii) that the tangent sheaf $\CT_X$ 
is encoded by the filtrations $E_\rho^\kbb$ of $N_\C$ with 
$E_\rho^{1}=\spann(\rho)$, $\rho\in\Sigma(1)$, as the only nontrivial subspace. 
It follows that $N=2$ and $\ell(\rho)=0$ for all $\rho\in\Sigma(1)$. On the other hand, the fan of the projective plane $\P^2$ is the inner normal fan of the polytope $\Delta$ cut out by the 
equations $\langle m,\rho_0\rangle\geq1$ and 
$\langle m,\rho_{1,2}\rangle\geq0$.
As a result, the proof of Proposition~\ref{prop-rangeBounded} shows that
$\HR(\CT_{\PP^2})$ is contained in the polytope whose facets are at distance 
two from the origin and parallel to $\Delta$, see the 
red lines in the figure below. 
However, the true Higgs range $\HR(\CT_{\PP^2})$ is even smaller;
it is given by the yellow polytope. 
See Section~\ref{revP2} for the computation;
the result for $\HR$ can be found in Subsection~(\ref{linDeps}). 
Further Higgs ranges are computed in Section~\ref{sec:MInSur}.
$$
\begin{tikzpicture}[scale=0.65]
\draw[color=oliwkowy!40] (-1.3,-0.3) grid (3.3,4.3);
\draw[thick,  color=black]
  (1,2) -- (3,2) (1,2) -- (1,4) (1,2) -- (-1,0);
\draw[thick, color=black]  (-1.5,0) node{$\rho_0$};
\draw[thick, color=black]  (3.5,2) node{$\rho_1$};
\draw[thick, color=black]  (1,4.5) node{$\rho_2$};
\draw[thick, color=black]  (3.5,3.5) node{$\Sigma$ in $N_\R$};
\draw[thick, color=black]  (1.0,-1.0) node{};
\end{tikzpicture}
\hspace{5em}
\begin{tikzpicture}[scale=0.65]
\draw[color=oliwkowy!40] (-2.3,-2.3) grid (4.3,4.3);
\fill[pattern color=yellow!90, pattern=north west lines]
 (-2,1) -- (1,1) -- (1,-2) -- cycle;
\draw[thick,  color=yellow]
  (-2,1) -- (1,1) -- (1,-2) -- cycle;
\draw[thick,  color=black]
  (1,0) -- (0,1);
\draw[thick,  color=red]
  (-2,4.3) -- (-2,-2.3) (-2.3,-2) -- (4.3,-2) (-2.2,4.2) -- (4.2,-2.2);
\draw[thick,  color=black] 
  (0,0) -- (0,1) (0,0) -- (1,0);
\fill[thick,  color=black] (0,0) circle (3pt);
\draw[thick, color=red]  (1.3,1.7) node{$F^{\rho_0}_2$};
\draw[thick, color=red]  (-2.7,1.5) node{$F^{\rho_1}_2$};
\draw[thick, color=red]  (-0.5,-2.5) node{$F^{\rho_2}_2$};
\fill[thick, color=black]
  (-1,0) circle (3pt) (-1,1) circle (3pt) (0,-1) circle (3pt)
  (1,-1) circle (3pt) (0,1) circle (3pt) (1,0) circle (3pt)
  (-2,1) circle (3pt) (1,-2) circle (3pt) (1,1) circle (3pt);
\draw[thick, color=green]
  (0,0) circle (4pt);
\draw[thick, color=black]  (4.5,1.5) node{$\HR(\CT_{\PP^2})$ in $M_\R$};
\end{tikzpicture}
$$
\end{example}

\section{\peeref{Trace-free} $\CT_X$-Higgs fields on $\PP^2$}
\label{revP2}
The computation of $\HR(\CE)$ and $\{V_r(\CE)\}_{r\in\HR(\CE)}$ for the 
intrinsic case $\CE=\CT_X$ will occupy us for the remainder of this paper. 
For simplicity, we write $\HR(X)$ and $V_r(X)$ in this case,
see Definition~\ref{def-HiggsRange}, and speak simply of (pre-)Higgs fields on $X$. 
Moreover, we will restrict to \peeref{trace-free} Higgs fields from now -- the reason is
that we can decompose any Higgs field into a \peeref{trace-free} one
and a vector field. \peeref{The latter were already computed 
in Example~\ref{ex-KlyachkoGlob}, cf.\ also Figure~\ref{fig:HiggsFieldsP2}.}
The corresponding subspaces we will denote by
$V^0_r(\PP^2)\subseteq V_r(\PP^2)$.
In the present section we focus on $\PP^2$ to illustrate the key ideas. 
\peeref{We also recover some results from the article~\cite{rayancoHiggs2} 
which also studies Higgs fields for the tangent sheaf of $\PP^2$
albeit from a non-toric point of view.}

\subsection{Encoding endomorphisms}
\label{encodeEndo}
In the particular case of $\PP^2$, we try to keep the
symmetry by understanding
$$
N=\Z^3/\ku{1}\cdot \Z
\hspace{1em}\mbox{and}\hspace{1em}
M=\ku{1}^\bot=\{r\in\Z^3\kst r_0+r_1+r_2=0\}\subseteq\Z^3.
$$
Thus, any $\cohikks:=\cohikks^r_s:N_\C\to N_\C$ 
becomes a map $\C^3\to\C^3$ sending
$\ku{1}$ into $\spann(\ku{1})$. As a result, $\cohikks$ is represented by
a $(3\times3)$-matrix 
\peeref{with equal row sums, i.e.}
$$
\textstyle
\tcohikks=\left(\begin{array}{ccc}
c_{00} & c_{01} & c_{02}\\
c_{10} & c_{11} & c_{12}\\
c_{20} & c_{21} & c_{22}
\end{array}\right)
\hspace{0.8em}\mbox{with}\hspace{0.8em}
\sum_{j=0}^2c_{ij} \hspace{0.5em}
\mbox{being independent of the row \peeref{number} $i$}.
$$
Altering $\tcohikks$ into
$\tcohikks+(\ku{a}\;\ku{b}\;\ku{c})$, i.e.~adding \peeref{a matrix consisting of
3 equal rows $(a\,b\,c)$}, does not change $\cohikks$. 
Hence, we obtain a canonical representative (also called $\cohikks$) 
by asking for $c_{11}=c_{22}=c_{33}=0$.
We will sometimes use the isomorphism 
$\Z^2\stackrel{\sim}{\to}N=\Z^3/\ku{1}\cdot \Z$ and its inverse
$\Z^3\surj N \stackrel{\sim}{\to}\Z^2$
given by the matrices
$$
\textstyle
\left(\begin{array}{cc} 0 & 0 \\ 1 & 0 \\ 0 & 1 \end{array}\right)
\hspace{1em}\mbox{and}\hspace{1em}
\left(\begin{array}{ccc} -1 & 1 & 0 \\ -1 & 0 & 1 \end{array}\right),
$$
respectively, in order to view $\cohikks$ or $\tcohikks$ as a linear map
$\C^2\to\C^2$ \peeref{given by}
$$
\textstyle
\left(\begin{array}{ccc} -1 & 1 & 0 \\ -1 & 0 & 1 \end{array}\right)
\cdot
\left(\begin{array}{ccc}
c_{00} & c_{01} & c_{02}\\
c_{10} & c_{11} & c_{12}\\
c_{20} & c_{21} & c_{22}
\end{array}\right)
\cdot
\left(\begin{array}{cc} 0 & 0 \\ 1 & 0 \\ 0 & 1 \end{array}\right)
=
\left(\begin{array}{cc} c_{11}-c_{01} & c_{12}-c_{02} \\
                        c_{21}-c_{01} & c_{22}-c_{02}
\end{array}\right).
$$
This yields $\trace(\cohikks)=(c_{00}+c_{11}+c_{22})-\sum_{j=0}^2c_{ij}$
(for each $i=0,1,2$). In particular, the normal form of $\cohikks$ equals
$$
\textstyle
\cohikks=
\left(\begin{array}{ccc}
0 & c_{01} & c_{02}\\
c_{10} & 0 & c_{12}\\
c_{20} & c_{21} & 0
\end{array}\right)
\hspace{1em}\mbox{with}\hspace{1em}
c_{01}+c_{02}=c_{10}+c_{12}=c_{20}+c_{21}=-\trace(\cohikks).
$$
Note that $\trace(\cohikks)$ does not refer to the 
literal interpretation as the trace of the representing
$(3\times 3)$-matrix, but of $\cohikks\in\gEnd(N)$.
As mentioned above, we will focus on \kbox{\peeref{trace-free}} endomorphisms. 
They can be written as
$$
\textstyle
\cohikksT=
\left(\begin{array}{ccc}
0 & x & -x \\ -y & 0 & y \\ z & -z & 0
\end{array}\right)
=x A_0 + y A_1 + z A_2
\leadsto
\cohikksT=
\left(\begin{array}{cc}
-x & x+y \\ -(x+z) & x
\end{array}\right)
$$
with
$$
A_0=\left(\begin{array}{rrr} 
0 & 1 & -1 \\ 0 & 0 & 0 \\ 0 & 0 & 0
\end{array}\right),\hspace{1em}
A_1=\left(\begin{array}{rrr}
0 & 0 & 0 \\ -1 & 0 & 1 \\ 0 & 0 & 0
\end{array}\right),\hspace{1em}
A_2=\left(\begin{array}{rrr}
0 & 0 & 0 \\ 0 & 0 & 0 \\ 1 & -1 & 0
\end{array}\right).
$$
The determinant (as an endomorphism of $N_\C$) 
is \kbox{$\det(\cohikksT)=xy+yz+zx$}.

\subsection{From filtrations to facets}
\label{filtFac}
\peeref{Recall from Example~\ref{ex-Klyachko}\,(iii) and
Example~\ref{ex-rangeBoundedTang} that
$E_\rho^{1}=\spann(\rho)$ with $\rho\in\Sigma(1)$
are the only non-trivial parts of the filtrations encoding $\CT_{\PP^2}$.
Moreover, as we have already used in the proof of 
Proposition~\ref{prop-rangeBounded}, a pre-Higgs field satisfies
$\cohikks^{r}_s(E_\rho^{\ell})\subseteq
E_\rho^{\ell-1-\langle r,\rho\rangle}$ for all $\ell\in\Z$. This can be sharpened to
$\cohikks^{r}_s(E_\rho^{\ell})\subseteq
E_\rho^{\ell-\langle r,\rho\rangle}$ if
$s\in\rho^\bot$, cf.\ Theorem~\ref{th-describeHiggs}.}

\medskip

Next take $\rho\in\Sigma(1)$ and $c\in\Z_{\geq 0}$. We consider the
affine hyperplanes
$$
F^\rho_c:=\big[\langle \kbb,-\rho\rangle=c\big]
\hspace{1em}\mbox{and}\hspace{1em}
F^\rho_{\geq c}:=\big[\langle\kbb,-\rho\rangle\geq c\big].
$$
at lattice distance $c$ from the origin. 
They are parallel to the corresponding facets of the polytope
$\Delta=\conv\{[0,0],\; [1,0],\; [0,1]\}$ from
Example~\ref{ex-rangeBoundedTang}. 
The second notion $F^\rho_{\geq c}$ points
to the outside area, i.e., beyond $F^\rho_c$.
$$
\begin{tikzpicture}[scale=0.65]
\draw[color=oliwkowy!40] (-1.3,-0.3) grid (3.3,4.3);
\draw[thick, color=black]  (1.0,-1.0) node{$\Sigma$ in $N_\R$};
\draw[thick,  color=black]
  (1,2) -- (3,2) (1,2) -- (1,4) (1,2) -- (-1,0);
\draw[thick, color=black]  (-1.5,0) node{$\rho_0$};
\draw[thick, color=black]  (3.5,2) node{$\rho_1$};
\draw[thick, color=black]  (1,4.5) node{$\rho_2$};
\end{tikzpicture}
\hspace{5em}
\begin{tikzpicture}[scale=0.65]
\draw[color=oliwkowy!40] (-2.3,-2.3) grid (4.3,4.3);
\draw[thick,  color=blue]
  (-1,-1) -- (2,-1) -- (-1,2) -- cycle;
\draw[thick,  color=red]
  (-2,4.3) -- (-2,-2.3) (-2.3,-2) -- (4.3,-2) (-2.2,4.2) -- (4.2,-2.2);
\fill[thick,  color=black] 
  (-1,-1) circle (3pt) (2,-1) circle (3pt) (-1,2) circle (3pt);
\fill[pattern color=yellow!50, pattern=north west lines]
  (-1,-1) -- (2,-1) -- (-1,2) -- cycle;
\draw[thick,  color=green] (0,0) circle (3pt);
\draw[thick, color=red]  (0.3,2.5) node{$F^{\rho_0}_2$};
\draw[thick, color=red]  (-2.7,1) node{$F^{\rho_1}_2$};
\draw[thick, color=red]  (-0.5,-2.5) node{$F^{\rho_2}_2$};
\draw[thick, color=blue]  (1.3,0.5) node{$F^{\rho_0}_1$};
\draw[thick, color=blue]  (-1.5,-0.2) node{$F^{\rho_1}_1$};
\draw[thick, color=blue]  (1.3,-1.5) node{$F^{\rho_2}_1$};
\end{tikzpicture}
$$
The more ``$\rho$-outside'' the degrees $r$ are,
i.e., the larger $c$ with $r\in F^\rho_{\geq c}$, 
the larger have to be the $\rho$-jumps $j$
with $\cohikks^r_s(E_\rho^\ell)\subseteq E_\rho^{\ell+j}$, 
i.e., the more restricted is $\cohikks^r$. To make this precise
we introduce the following notation: An endomorphism
$\cohikks\in\gEnd(E)$ is said to belong to the classes
\vspace{1ex}

\begin{enumerate}
\item[(i)$_\rho$]
if \kbox{$\cohikks(\rho)\in\spann(\rho)$}, and
\vspace{1.0ex}
\item[(ii)$_\rho$]
if \kbox{$\cohikks(E)\subseteq\spann(\rho)\subseteq\ker(\cohikks)$}.
Note that the latter implies nilpotency.
\vspace{1.0ex}
\end{enumerate}

Obviously, [(ii)$_\rho$ $\then$ (i)$_\rho$]. In the language of
Subsection (\ref{encodeEndo}), these conditions translate 
for \peeref{trace-free} endomorphisms $\cohikks$ as follows
(here explained for $\rho=\rho_0$):
\begin{enumerate} 
\item[(i)$_0$]
$\cohikks$ is a linear combination 
$$
\cohikks=
\left(\begin{array}{ccc}
0 & x &-x\\
z & 0 &-z\\
z &-z & 0
\end{array}\right)
= xA_0+z(A_2-A_1), \mbox{ and}
$$
\item[(ii)$_0$]
$
\cohikks=
\left(\begin{array}{ccc}
0 & x &-x\\
0 & 0 & 0\\
0 & 0 & 0
\end{array}\right) = x\cdot A_0
$
is nilpotent.
\end{enumerate}

\begin{lemma}
\label{lem-rotBlau}
Let $r,s\in M$. Then $\cohikks^r_s$ satisfies the general
Higgs condition 
$\cohikks^{r}_s(E_\rho^{\kbb})\subseteq
E_\rho^{\kbb-1-\langle r,\rho\rangle}$
if and only if
$
\renewcommand{\arraystretch}{1.2}
\left\{\begin{array}{@{}ll@{}}
r\in F^\rho_{\geq 3} & \then \cohikks^{r}_s=0\\
r\in F^\rho_{2} & \then \cohikks^{r}_s\in {\rm (ii)}_\rho\\
r\in F^\rho_{1} & \then \cohikks^{r}_s\in {\rm (i)}_\rho.
\end{array}\right\}$
Moreover, if $s\in\rho^\bot$, then the associated stronger condition arises
from replacing $F^\rho_{\geq 3}$ by $F^\rho_{\geq 2}$ and
$F^\rho_{i}$ by $F^\rho_{i-1}$ for $i=1,2$.
\end{lemma}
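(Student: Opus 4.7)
The plan is to specialise Theorem~\ref{th-describeHiggs} to the tangent sheaf via the filtrations described in Example~\ref{ex-Klyachko}\,(iii), and to enumerate the resulting constraints according to the position of $\langle r,\rho\rangle$. Since the filtration $E_\rho^\kbb$ of $\CT_{\PP^2}$ has only two actual jumps, namely $E_\rho^\ell = N_\C$ for $\ell\le 0$, $E_\rho^1=\spann(\rho)$, and $E_\rho^\ell=0$ for $\ell\ge 2$, all the non-trivial constraints come from $\ell=0$ and $\ell=1$; the remaining values of $\ell$ produce either a tautology ($E_\rho^\ell=0$ on the left) or a consequence of the $\ell=0$ case (when $\ell<0$, since $E_\rho^\ell=E_\rho^0$ and $E_\rho^{\ell-1-\langle r,\rho\rangle}\supseteq E_\rho^{-1-\langle r,\rho\rangle}$). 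Thus the task reduces to analysing the two inclusions
\[
\cohikks^{r}_s(N_\C)\subseteq E_\rho^{-1-\langle r,\rho\rangle},
\qquad
\cohikks^{r}_s\bigl(\spann(\rho)\bigr)\subseteq E_\rho^{-\langle r,\rho\rangle}
\]
for $s\notin\rho^\perp$, and the analogous pair with the exponents shifted up by one for $s\in\rho^\perp$.

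First I would treat the generic case $s\notin\rho^\perp$. Using the reformulation $r\in F^\rho_c\iff\langle r,\rho\rangle=-c$, the trichotomy emerges by computing the two target indices $-1-\langle r,\rho\rangle$ and $-\langle r,\rho\rangle$:
\begin{itemize}
\item If $r\in F^\rho_{\ge 3}$, then $-1-\langle r,\rho\rangle\ge 2$, so $E_\rho^{-1-\langle r,\rho\rangle}=0$, forcing $\cohikks^{r}_s(N_\C)=0$, i.e.\ $\cohikks^r_s=0$.
\item If $r\in F^\rho_2$, then the first inclusion reads $\cohikks^{r}_s(N_\C)\subseteq \spann(\rho)$ while the second gives $\cohikks^{r}_s(\spann(\rho))\subseteq 0$, which together is precisely (ii)$_\rho$.
\item If $r\in F^\rho_1$, then the first inclusion is vacuous (target is $N_\C$) while the second becomes $\cohikks^{r}_s(\spann(\rho))\subseteq\spann(\rho)$, which is (i)$_\rho$.
\end{itemize}
For the converse direction the above list gives the required inclusions at $\ell=0$ and $\ell=1$; the check at all other $\ell$ is then automatic as explained in the first paragraph.

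The sharpened case $s\in\rho^\perp$ is handled in an entirely parallel manner, the only change being that Theorem~\ref{th-describeHiggs} replaces $\ell-1-\langle r,\rho\rangle$ by $\ell-\langle r,\rho\rangle$ in the target filtration index. The same trichotomy then occurs, but shifted: the vanishing regime starts already at $F^\rho_{\ge 2}$, condition (ii)$_\rho$ appears on $F^\rho_1$, and condition (i)$_\rho$ on $F^\rho_0$, exactly as claimed.

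Overall the argument is a direct substitution into Theorem~\ref{th-describeHiggs} combined with the observation that the tangent filtration has only two non-trivial steps, so no real obstacle is expected; the only point that deserves explicit mention is the reduction from ``all $\ell\in\Z$'' to the two critical values $\ell=0$ and $\ell=1$, which uses the stabilisation of $E_\rho^\kbb$ at both ends.
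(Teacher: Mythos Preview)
Your proposal is correct and is precisely the ``straightforward'' verification the paper alludes to (the paper's own proof consists of the single sentence ``The proof is straightforward''). You have made explicit the reduction to the two critical indices $\ell=0,1$ and the case analysis in $\langle r,\rho\rangle$, which is exactly what is needed; nothing further is required.
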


The proof is straightforward.

\medskip

In the following figures we will indicate the conditions 
(i)$_\rho$ and (ii)$_\rho$ by the colors blue and red, respectively.
Moreover, we put black (or green) dots on all lattice points $r\in M$ where
a non-vanishing $\cohikks^{r}_s$ is (still) possible. 
For general $s\in M$, we start with a blue $3\Delta$ and a red $6\Delta$
(shifted into  central position).
For $s\in\rho^\bot$, the $\rho$-facets will be shifted towards the
origin yielding blue $2\Delta$ and red $5\Delta$ in different positions.

\medskip

Note that for linearily independent $\rho,\rho'$ the intersection
${\rm (i)}_\rho\cap{\rm (ii)}_{\rho'}$ leads to $\cohikks^{r}_s=0$.
Hence, we can exclude red-red and red-blue intersections.
The blue-blue intersection ${\rm (i)}_{\nu-1}\cap{\rm (i)}_{\nu+1}$
leads to the unique $\cohikks=A_{\nu-1}+A_{\nu+1}-A_\nu$
($\nu\in\Z/3\Z$).
$$
\newcommand{\abstZ}{\hspace{2.5em}}
\begin{tikzpicture}[scale=0.45]
\draw[color=oliwkowy!40] (-2.3,-2.3) grid (4.3,4.3);
\fill[pattern color=yellow!50, pattern=north west lines]
  (-1,-1) -- (2,-1) -- (-1,2) -- cycle;
\draw[thick,  color=blue]
  (-1,-1) -- (2,-1) -- (-1,2) -- cycle;
\draw[thick,  color=red]
  (-2,4.3) -- (-2,-2.3) (-2.3,-2) -- (4.3,-2) (-2.2,4.2) -- (4.2,-2.2);
\fill[thick,  color=black]
  (-1,-1) circle (4pt) (2,-1) circle (4pt) (-1,2) circle (4pt);
\fill[thick,  color=black]
  (-1,0) circle (4pt) (-1,1) circle (4pt) 
  (0,-1) circle (4pt) (1,-1) circle (4pt)
  (0,1) circle (4pt) (1,0) circle (4pt);
\fill[thick,  color=black]
  (-2,0) circle (4pt) (-2,1) circle (4pt) (-2,2) circle (4pt)
  (0,-2) circle (4pt) (1,-2) circle (4pt) (2,-2) circle (4pt)
  (0,2) circle (4pt) (1,1) circle (4pt) (2,0) circle (4pt);
\draw[thick,  color=black]  (1.0,-3.0) node{general $s\in M$};
\draw[thick,  color=green] (0,0) circle (4pt);
\end{tikzpicture}
\abstZ
\begin{tikzpicture}[scale=0.45]
\draw[color=oliwkowy!40] (-2.3,-2.3) grid (4.3,4.3);
\draw[thick,  color=blue]
  (-1,-1) -- (1,-1) -- (-1,1) -- cycle;
\draw[thick,  color=red]
  (-2,3.3) -- (-2,-2.3) (-2.3,-2) -- (3.3,-2) (-2.2,3.2) -- (3.2,-2.2);
\draw[thin, color=gray]
  (-2,2) circle (4pt) (-1,2) circle (4pt) (0,2) circle (4pt)
  (1,1) circle (4pt) (2,0) circle (4pt) (2,-1) circle (4pt) 
  (2,-2) circle (4pt);
\fill[thick,  color=black]
  (-1,-1) circle (4pt) (-1,0) circle (4pt) (-1,1) circle (4pt) 
  (0,-1) circle (4pt) (1,-1) circle (4pt) (0,1) circle (4pt) 
  (1,0) circle (4pt) (-2,0) circle (4pt) (-2,1) circle (4pt) 
  (0,-2) circle (4pt) (1,-2) circle (4pt);
\draw[thick,  color=black]  (1.0,-3.0) node{$s\in \rho_0^\bot$};
\draw[thick,  color=green] (0,0) circle (4pt);
\end{tikzpicture}
\abstZ
\begin{tikzpicture}[scale=0.45]
\draw[color=oliwkowy!40] (-2.3,-2.3) grid (4.3,4.3);
\draw[thick,  color=blue]
  (0,-1) -- (2,-1) -- (0,1) -- cycle;
\draw[thick,  color=red]
  (-1,3.3) -- (-1,-2.3) (-1.3,-2) -- (4.3,-2) (-1.2,3.2) -- (4.2,-2.2);
\draw[thin, color=gray]
  (-2,0) circle (4pt) (-2,1) circle (4pt) (-2,2) circle (4pt)
  (-1,2) circle (4pt) (0,2) circle (4pt) (-1,-1) circle (4pt) 
  (0,-2) circle (4pt);
\fill[thick,  color=black]
  (2,-1) circle (4pt) (-1,0) circle (4pt) (-1,1) circle (4pt) 
  (0,-1) circle (4pt) (1,-1) circle (4pt) (0,1) circle (4pt) 
  (1,0) circle (4pt) (1,-2) circle (4pt) (2,-2) circle (4pt)
  (1,1) circle (4pt) (2,0) circle (4pt);
\draw[thick,  color=black]  (1.0,-3.0) node{$s\in \rho_1^\bot$};
\draw[thick,  color=green] (0,0) circle (4pt);
\end{tikzpicture}
\abstZ
\begin{tikzpicture}[scale=0.45]
\draw[color=oliwkowy!40] (-2.3,-2.3) grid (4.3,4.3);
\draw[thick,  color=blue]
  (-1,0) -- (1,0) -- (-1,2) -- cycle;
\draw[thick,  color=red]
  (-2,4.3) -- (-2,-1.3) (-2.3,-1) -- (3.3,-1) (-2.2,4.2) -- (3.2,-1.2);
\draw[thin, color=gray]
  (0,-2) circle (4pt) (1,-2) circle (4pt) (2,-2) circle (4pt)
  (2,-1) circle (4pt) (2,0) circle (4pt) (-1,-1) circle (4pt)
  (-2,0) circle (4pt);
\fill[thick,  color=black]
  (-1,2) circle (4pt) (-1,0) circle (4pt) (-1,1) circle (4pt) 
  (0,-1) circle (4pt) (1,-1) circle (4pt) (0,1) circle (4pt) 
  (1,0) circle (4pt) (-2,1) circle (4pt) (-2,2) circle (4pt)
  (0,2) circle (4pt) (1,1) circle (4pt);
\draw[thick,  color=black]  (1.0,-3.0) node{$s\in \rho_2^\bot$};
\draw[thick,  color=green] (0,0) circle (4pt);
\end{tikzpicture}
$$

\subsection{Linear dependence on $s$}
\label{linDeps}
For a fixed $r\in M$, the endomorphisms $\cohikks^{r}_s$ do linearily depend
on $s\in M$. For the present $\PP^2$ example, we choose
$s^0:=[0,1,-1]$, $s^1:=[-1,0,1]$ and $s^2:=[1,-1,0]$ being contained in
$\rho_0^\bot$, $\rho_1^\bot$, and  $\rho_2^\bot$, respectively.
Denoting $\cohikks_i:=\cohikks^{r}_{s^i}$, the relation
$s^0+s^1+s^2=0$ implies $\cohikks_0+\cohikks_1+\cohikks_2=0$.
Moreover, since any two of $\{\cohikks_0,\cohikks_1,\cohikks_2\}$ span
$\{\cohikks_s\kst s\in M\}$, the vanishing of two 
$\cohikks_i$ implies the vanishing of $\cohikks^{r}_{s}$ for all
$s\in M$. This leads to the following improvement of the previous
figures:
$$
\newcommand{\abstZ}{\hspace{2.5em}}
\begin{tikzpicture}[scale=0.45]
\draw[color=oliwkowy!40] (-2.3,-2.3) grid (4.3,4.3);
\fill[pattern color=yellow!50, pattern=north west lines]
  (-1,-1) -- (2,-1) -- (-1,2) -- cycle;
\draw[thick,  color=blue]
  (-1,-1) -- (2,-1) -- (-1,2) -- cycle;
\draw[thick,  color=red]
  (-2,4.3) -- (-2,-2.3) (-2.3,-2) -- (4.3,-2) (-2.2,4.2) -- (4.2,-2.2);
\fill[thick,  color=black]
  (-1,0) circle (4pt) (-1,1) circle (4pt) (0,-1) circle (4pt) 
  (1,-1) circle (4pt) (0,1) circle (4pt) (1,0) circle (4pt) 
  (-2,1) circle (4pt) (1,-2) circle (4pt) (1,1) circle (4pt);
\draw[thick,  color=black]  (1.0,-3.0) node{general $s\in M$};
\draw[thick,  color=green] (0,0) circle (4pt);
\end{tikzpicture}
\abstZ
\begin{tikzpicture}[scale=0.45]
\draw[color=oliwkowy!40] (-2.3,-2.3) grid (4.3,4.3);
\draw[thick,  color=blue]
  (-1,-1) -- (1,-1) -- (-1,1) -- cycle;
\draw[thick,  color=red]
  (-2,3.3) -- (-2,-2.3) (-2.3,-2) -- (3.3,-2) (-2.2,3.2) -- (3.2,-2.2);
\draw[thin, color=gray]
  (1,1) circle (4pt);
\fill[thick,  color=black]
  (-1,0) circle (4pt) (-1,1) circle (4pt) (0,-1) circle (4pt) 
  (1,-1) circle (4pt) (0,1) circle (4pt) (1,0) circle (4pt) 
  (-2,1) circle (4pt) (1,-2) circle (4pt);
\draw[thick,  color=black]  (1.0,-3.0) node{$s\in \rho_0^\bot$};
\draw[thick,  color=green] (0,0) circle (4pt);
\end{tikzpicture}
\abstZ
\begin{tikzpicture}[scale=0.45]
\draw[color=oliwkowy!40] (-2.3,-2.3) grid (4.3,4.3);
\draw[thick,  color=blue]
  (0,-1) -- (2,-1) -- (0,1) -- cycle;
\draw[thick,  color=red]
  (-1,3.3) -- (-1,-2.3) (-1.3,-2) -- (4.3,-2) (-1.2,3.2) -- (4.2,-2.2);
\draw[thin, color=gray]
  (-2,1) circle (4pt);
\fill[thick,  color=black]
  (-1,0) circle (4pt) (-1,1) circle (4pt) (0,-1) circle (4pt) 
  (1,-1) circle (4pt) (0,1) circle (4pt) (1,0) circle (4pt) 
  (1,-2) circle (4pt) (1,1) circle (4pt);
\draw[thick,  color=black]  (1.0,-3.0) node{$s\in \rho_1^\bot$};
\draw[thick,  color=green] (0,0) circle (4pt);
\end{tikzpicture}
\abstZ
\begin{tikzpicture}[scale=0.45]
\draw[color=oliwkowy!40] (-2.3,-2.3) grid (4.3,4.3);
\draw[thick,  color=blue]
  (-1,0) -- (1,0) -- (-1,2) -- cycle;
\draw[thick,  color=red]
  (-2,4.3) -- (-2,-1.3) (-2.3,-1) -- (3.3,-1) (-2.2,4.2) -- (3.2,-1.2);
\draw[thin, color=gray]
  (1,-2) circle (4pt);
\fill[thick,  color=black]
  (-1,0) circle (4pt) (-1,1) circle (4pt) (0,-1) circle (4pt) 
  (1,-1) circle (4pt) (0,1) circle (4pt) (1,0) circle (4pt) 
  (-2,1) circle (4pt) (1,1) circle (4pt);
\draw[thick,  color=black]  (1.0,-3.0) node{$s\in \rho_2^\bot$};
\draw[thick,  color=green] (0,0) circle (4pt);
\end{tikzpicture}
$$
Finally, this yields the

\begin{theorem}
\label{th-HiggsRangePb}
The Higgs range $\HR({\PP^2})$
equals the convex hull of the points $r^0=[-2,1,1]$, $r^1=[1,-2,1]$, and 
$r^2=[1,1,-2]$. Moreover, for each $\nu\in\Z/3\Z$ we have
$\cohikks^{r^\nu}_{s^\nu}=0$.
The dimensions of the vector space $V^0_r(\PP^2)$ equal 
$1$ if $r$ is a vertex of $\HR({\PP^2})$, equal $2$ if $r$ is among the six
remaining lattice points on the boundary, and equal\peeref{s} $3$ for $r=0$.
\vspace{-2ex}
\end{theorem}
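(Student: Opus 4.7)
The plan is to combine Lemma~\ref{lem-rotBlau} with the linear relation $\cohikks^r_{s^0}+\cohikks^r_{s^1}+\cohikks^r_{s^2}=0$ (forced by $s^0+s^1+s^2=0$) and to run a case analysis over the ten lattice points of $\conv\{r^0,r^1,r^2\}$, exploiting the cyclic $\Z/3\Z$-symmetry.

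The containment $\HR(\PP^2)\subseteq\conv\{r^0,r^1,r^2\}$ is immediate: the triangle is cut out by $\langle r,\rho_\nu\rangle\geq-2$ for $\nu=0,1,2$, and if $r$ violates some such inequality then $r\in F^{\rho_\nu}_{\geq 3}$, so Lemma~\ref{lem-rotBlau} forces $\cohikks^r_s=0$ for all $s\in M$, and $r$ is inadmissible. For the matching reverse inclusion together with the dimension count, I parametrise $V^0_r(\PP^2)$ as follows. Since $s^0,s^1$ span $M_\C$, a \peeref{trace-free} pre-Higgs field of degree $r$ is determined by the triple $(X_0,X_1,X_2):=(\cohikks^r_{s^0},\cohikks^r_{s^1},\cohikks^r_{s^2})\in C_0\times C_1\times C_2$ subject to $X_0+X_1+X_2=0$. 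Here $C_\nu\subseteq\gEnd(N_\C)$ is the subspace of \peeref{trace-free} endomorphisms obtained by intersecting the three conditions that Lemma~\ref{lem-rotBlau} imposes on $\cohikks^r_{s^\nu}$: the general ones for $\rho\neq\rho_\nu$ and the sharpened one for $\rho_\nu$ itself (since $s^\nu\in\rho_\nu^\bot$).

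At a vertex, say $r^0$, the sharpened condition at $\rho_0$ places $r^0$ in $F^{\rho_0}_{\geq 2}$, so $C_0=0$; this in particular confirms $\cohikks^{r^0}_{s^0}=0$. The other two conditions reduce to $C_1=C_2={\rm (ii)}_{\rho_0}$, the one-dimensional line spanned by $A_0$, and the relation $X_1+X_2=0$ leaves one free parameter, giving $\dim V^0_{r^0}(\PP^2)=1$. At an edge-interior point such as $r=[-1,0,1]$, Lemma~\ref{lem-rotBlau} yields $C_0={\rm (ii)}_{\rho_0}$ (one-dimensional), $C_1={\rm (i)}_{\rho_0}\cap{\rm (i)}_{\rho_1}$ (one-dimensional) and $C_2={\rm (i)}_{\rho_0}$ (two-dimensional). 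Since both $C_0$ and $C_1$ are contained in $C_2$, the sum $X_0+X_1$ lies automatically in $C_2$, so the zero-sum relation imposes no extra condition and $\dim V^0_r=1+1=2$. At the origin $r=0$, each $s^\nu\in\rho_\nu^\bot$ sharpens the $\rho_\nu$-condition at $r\in F^{\rho_\nu}_0$, giving $C_\nu={\rm (i)}_{\rho_\nu}$, a two-dimensional hyperplane in the three-dimensional space of \peeref{trace-free} endomorphisms; counting $2+2+2$ parameters minus the three scalar equations packaged in $X_0+X_1+X_2=0$ (independent because already $C_0+C_1$ fills the whole three-dimensional space) yields $\dim V^0_0(\PP^2)=3$. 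All remaining lattice points follow from the cyclic $\Z/3\Z$-symmetry.

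The main obstacle is keeping track of the interplay between the general and sharpened versions of Lemma~\ref{lem-rotBlau} for each of the three contractions $\cohikks^r_{s^\nu}$; once the subspaces $C_\nu$ are identified, the dimension counts reduce to elementary linear algebra in the $(x,y,z)$-coordinates of Subsection~\ref{encodeEndo}, and the ten explicit cases assemble exactly into the yellow polytope of Example~\ref{ex-rangeBoundedTang}, establishing both the equality $\HR(\PP^2)=\conv\{r^0,r^1,r^2\}$ and the dimension formulas.
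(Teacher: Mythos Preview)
Your argument is correct and matches the paper's approach essentially line for line: the paper establishes the containment $\HR(\PP^2)\subseteq\conv\{r^0,r^1,r^2\}$ and the vanishing $\cohikks^{r^\nu}_{s^\nu}=0$ in Subsections~(\ref{filtFac})--(\ref{linDeps}), then defers the dimension count to Subsection~(\ref{endoKnot}), which performs exactly your three-case analysis (vertex, boundary non-vertex, origin) via Lemma~\ref{lem-rotBlau} and the relation $\cohikks_0+\cohikks_1+\cohikks_2=0$. The only cosmetic differences are that the paper chooses $r=[-1,1,0]$ rather than your $r=[-1,0,1]$ as representative boundary point (related by the $\Z/3\Z$-symmetry) and records explicit bases for each $V^0_r$ instead of your containment-based dimension count.
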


$$
\begin{tikzpicture}[scale=0.45]
\draw[color=oliwkowy!40] (-2.3,-2.3) grid (4.3,4.3);
\fill[pattern color=green!50, pattern=north east lines]
  (-2,1) -- (1,1) -- (1,-2) -- cycle;
\draw[thin,  color=blue]
  (-1,-1) -- (2,-1) -- (-1,2) -- cycle;
\draw[thin,  color=red]
  (-2,4.3) -- (-2,-2.3) (-2.3,-2) -- (4.3,-2) (-2.2,4.2) -- (4.2,-2.2);
\fill[thick,  color=black]
  (-1,0) circle (4pt) (-1,1) circle (4pt) (0,-1) circle (4pt)
  (1,-1) circle (4pt) (0,1) circle (4pt) (1,0) circle (4pt);
\draw[thick,  color=black] 
  (-2,1) circle (4pt) (1,-2) circle (4pt) (1,1) circle (4pt);
\draw[thick,  color=black]  (1.0,-3.8) node{The Higgs range $\HR(\PP^2)$};
\draw[thick,  color=green] (0,0) circle (4pt);
\draw[thick,  color=black]
  (-2,1) -- (1,-2) -- (1,1) -- cycle;
\draw[thick,  color=black]  
  (-2.7,1.5) node{$r^1$} (1.6,1.5) node{$r^0$} (1.6,-2.5) node{$r^2$};
\end{tikzpicture}
$$

\begin{proof}
\peeref{So far we have seen that $\HR(\PP^2)$ is contained in the asserted convex
hull, and we do also know that $\cohikks^{r^\nu}_{s^\nu}=0$.}
It remains to check the dimensions of $V_r^0(\PP^2)$ -- but this will be done in
Subsection~(\ref{endoKnot}).
\end{proof}

\subsection{Analysing the endomorphisms for each \peeref{degree}}
\label{endoKnot}
We have three type of degrees $r\in M$. Let us take a closer look to all of
them and their associated $\cohikks^{r}_{s}$.
Recall from Subsection~(\ref{linDeps}) 
that we abreviate $\cohikks_\nu:=\cohikks^r_{s^\nu}$ where
$s^0=[0,1,-1]$, $s^1=[-1,0,1]$, and $s^2=[1,-1,0]$.
Moreover, we will use
for a general $s=[s_0,s_1,s_2]\in\ku{1}^\bot=M$ the representation
$s=[s_0,s_1,s_2]=s_1\cdot s^0-s_0\cdot s^1=s_0\cdot s^2-s_2\cdot s^0$.
Further, we keep the 
\peeref{trace-free} endomorphisms $A_0$, $A_1$, and $A_2$ 
from Subsection (\ref{encodeEndo}).
The degrees which are candidate for carrying
pre-Higgs fields can be subdivided into three classes.
\vspace{1ex}
\begin{enumerate}
\item[(i)]
{\em $r\in\{r^0,r^1,r^2\}$ is a vertex of $\HR$.}
Let us assume that $r=r^0$. 
\\[0.5ex]
For all $s\in M$, $\cohikks^{r^0}_{s}$ is of class (ii)$_0$, cf.\ 
Subsection (\ref{filtFac}). 
Moreover, $\cohikks^{r^0}_{0}=0$
implies that $
\cohikks^{r^0}_{s} = 
-s_0\cdot \cohikks^{r^0}_{1} = s_0\cdot \cohikks^{r^0}_{2}
$ for $s=[s_0,s_1,s_2]=s_1\cdot s^0-s_0\cdot s^1\in M$.
Altogether, this means that
$$
\cohikks^{r^0}_{s} = 
\left(\begin{array}{ccc}
0 & c_0s_0 &-c_0s_0\\
0 & 0 & 0\\ 
0 & 0 & 0
\end{array}\right)
= c_0\cdot\langle s,\rho_0\rangle\cdot A_0,\hspace{0.5em}
\mbox{i.e.,}\;
\cohikks^{r^0}=c_0\cdot A_0\otimes \rho_0
$$
for some parameter $c_0\in\C$. In other words,
$\{A_0\otimes \rho_0\}$ is a $\C$-basis for 
all possible $\cohikks^{[-2,1,1]}$.
\vspace{1ex}
\item[(ii)]
{\em $r\in M\cap\partial\HR$, but $r$ is not a vertex of $\HR$.}
Let us assume that $r=[-1,1,0]$. 
\\[0.5ex]
In the coordinates of the previous figures, $r$ equals $[1,0]$,
i.e., in the figures for $\rho_0^\bot$, $\rho_1^\bot$, and $\rho_2^\bot$
$r$ sits on the $0$-red line, the $0$-blue line, and the intersection
of the $0$-blue and the $2$-blue lines, respectively. 
This information translates
into \kbox{$\cohikks^{[-1,1,0]}_0\in {\rm (ii)}_0$, 
$\cohikks^{[-1,1,0]}_1\in {\rm (i)}_0$,
and $\cohikks^{[-1,1,0]}_2\in {\rm (i)}_0\cap{\rm (i)}_2$}.
\\
These three classes equal $\spann(A_0)$, 
$\spann(A_0,\, A_2-A_1)$, and $\spann(A_0-A_1+A_2)$, 
respectively. Thus, the relation
$\cohikks_0+\cohikks_1+\cohikks_2=0$ leads to the following basis
for the vector space of possible \peeref{trace-free} $\cohikks^{[-1,1,0]}$:
$$
\{A_0\otimes\rho_2,\; (A_0-A_1+A_2)\otimes \rho_0\}.
$$
\vspace{-2ex}
\item[(iii)]
{\em $r=0$ is the origin.}
\\[0.5ex]
Here we know that $\cohikks^0_\nu\in {\rm (i)}_\nu$ for $\nu=0,1,2$.
The \peeref{trace-free} part of these classes is
$\,\spann(A_0,\, A_2-A_1)$, $\,\spann(A_1,\, A_0-A_2)$, and
$\,\spann(A_2,\, A_1-A_0)$,
respectively. This implies that
$$
\begin{array}{rcl}
\cohikks_\nu^0 &=&
(c_{\nu-1}-c_{\nu+1})A_\nu + c_\nu(A_{\nu-1}-A_{\nu+1})\\
&=&
(c_{\nu-1}A_\nu + c_\nu A_{\nu-1}) -
(c_{\nu+1}A_\nu + c_\nu A_{\nu+1})
\end{array}
$$
for $\nu\in\Z/3\Z$ and some $c\in\C^3$.
This leads to the following basis
for the vector space of all possible \peeref{trace-free} $\cohikks^0$:
$$
\{(A_2\otimes\rho_1+A_1\otimes\rho_2),\; 
(A_0\otimes\rho_2+A_2\otimes\rho_0),\; 
(A_1\otimes\rho_0+A_0\otimes\rho_1)\}.
\vspace{1ex}
$$
\end{enumerate}

The dimension of the space of \peeref{trace-free} Higgs fields on $\PP2$ equals 
$(3\times 1)+(6\times 2) + (1\times 3)=18$. The full vector space of pre-Higgs 
fields is obtained by adding the eight dimensional space of vector fields, 
cf.~\ref{ex-KlyachkoGlob} which are of the form $\id_N\otimes\rho_i$. 

\medskip

The details are summarised in Figure~\ref{fig:HiggsFieldsP2}. For a better orientation we have kept the 
Higgs range polytope $\HR$ (indicated in black) and the original reflexive 
polytope $3\Delta$ (indicated in blue). As before, the origin is visible as 
a green circle. 

\begin{figure}[ht]
\newcommand{\coH}[2]{{\begin{minipage}{#1em}{$\ks #2$}\end{minipage}}}
\newcommand{\cid}[1]{{\color{red}s_#1\id_N}}
\newcommand{\aid}[1]{\mbox{ \tiny and }}
\begin{tikzpicture}[x=3cm, y=1.5cm]
\draw[thin,  color=blue]
  (-1,-1) -- (2,-1) -- (-1,2) -- cycle;
\draw[thick,  color=blue]
  (-0.6,1.8) node{$3\Delta$};
\draw[thin,  color=gray]
  (-1,0) circle (2pt) (-1,1) circle (2pt) (0,-1) circle (2pt)
  (1,-1) circle (2pt) (0,1) circle (2pt) (1,0) circle (2pt);
\draw[thin,  color=black]
  (-2,1) circle (15pt) (1,-2) circle (15pt) (1,1) circle (15pt);
\draw[thick,color=black] 
 (0,0) node{\fbox{$\coH{5.3}{A_2\otimes\rho_1+A_1\otimes\rho_2\\
   A_0\otimes\rho_2+A_2\otimes\rho_0\\A_1\otimes\rho_0+A_0\otimes\rho_1\\\langle\{\id_N\otimes\rho_i\}_{i=1}^3\rangle}$}} 
 (1,0) node{\fbox{$\coH{6.1}{A_0\otimes\rho_2,\,\id_N\otimes\rho_0\\
   (A_0-A_1+A_2)\otimes\rho_0}$}} 
 (1,1) node{{$\coH{2.8}{A_0\otimes\rho_0}$}}
 (0,1) node{\fbox{$\coH{6.1}{A_0\otimes\rho_1,\,\id_N\otimes\rho_0\\
   (A_0+A_1-A_2)\otimes\rho_0}$}}
 (-1,1) node{\fbox{$\coH{6.1}{A_1\otimes\rho_0,\,\id_N\otimes\rho_1\\
   (A_0+A_1-A_2)\otimes\rho_1}$}}
 (-2,1) node{{$\coH{2.5}{A_1\otimes\rho_1}$}}
 (-1,0) node{\fbox{$\coH{6.7}{A_1\otimes\rho_2,\,\id_N\otimes\rho_1\\
   (-A_0+A_1+A_2)\otimes\rho_1}$}}
 (0,-1) node{\fbox{$\coH{6.7}{A_2\otimes\rho_1,\,\id_N\otimes\rho_2\\
   (-A_0+A_1+A_2)\otimes\rho_2}$}}
 (1,-2) node{{$\coH{2.5}{A_2\otimes\rho_2}$}}
 (1,-1) node{\fbox{$\coH{6.1}{A_2\otimes\rho_0,\,\id_N\otimes\rho_2\\
   (A_0-A_1+A_2)\otimes\rho_2}$}};
\draw[thick,  color=green] (0,0) circle (4pt);
\draw[thin,  color=gray]
  (-2,1) -- (1,-2) -- (1,1) -- cycle;
\end{tikzpicture}
\caption{\peeref{The pre-Higgs fields on $\P^2$. For each nontrivial degree the box respectively the circle at $r\in M$ contains a basis for the linear pre-Higgs fields of degree $r$. For instance, the homogeneous pre-Higgs fields of degree $(-2,1)$ are of the form $c_0A_1\otimes\rho_1\otimes\chi^{(-2,1)}$, $c_0\in\C$. To keep the symmetry we used a generating system for the degree $0$ pre-Higgs field of pure trace.}}\label{fig:HiggsFieldsP2}
\end{figure}

\subsection{From pre-Higgs to Higgs}
\label{noPre}
The commutators of the \peeref{trace-free} matrices $A_i$ ($i\in\Z/3\Z$)
can be expressed as
$$
[A_{i-1},A_{i+1}]=(A_{i-1}+A_{i+1})-A_i
\hspace{2em}(i\in\Z/3\Z).
$$
In Subsection (\ref{endoKnot}) we got an 18-dimensional space of 
trace-free pre-Higgs fields on $\PP^2$. 
Using {\sc Singular}~\cite{singular}, 
we have incorporated the commutator vanishing
$[\coHikks_s,\coHikks_t]=0$ for all $s,t\in M$. This leads
to an ideal $I$ in 18 variables with 100 generators (increasing to 435 
generators after calculating a dp-Gr\"obner basis). 
The dimension of $V(I)\subset\C^{18}$ is 8, 
hence 7 if understood as a projective subvariety of $\PP^{17}_\C$.
However, it is not clear whether $V(I)$ is smooth or at least irreducible --
{\sc Singular} crashed when calculating the 10-minors, and it timed-out
when trying the primary decomposition.

\subsubsection{Higgs facets}
\label{higgsFacets}
On the other hand, it is easily possible to calculate the commutator 
property for the three facets of the 
Higgs range polytope $\HR=\HR(\CT_{\PP^2})$. 
This leads to all Higgs fields $\Phi$ with maximal one-dimensional
Higgs polytopes $\nabla(\Psi)\subseteq\HR$.
According to the figure at the end of Subsection (\ref{endoKnot}),
we have named the 18 coordinates in the following way:
$$
\begin{array}{c@{\hspace{3em}}c@{\hspace{3em}}c@{\hspace{3em}}c}
\fbox{$c_{30}$} & 
\fbox{$\begin{array}{c}c_{20}\\d_{20}\end{array}$} & 
\fbox{$\begin{array}{c}c_{10}\\d_{10}\end{array}$} &
\fbox{$c_{00}$} \\[3ex]
& \fbox{$\begin{array}{c}c_{21}\\d_{21}\end{array}$} &
\fbox{$\begin{array}{c}e_0\\e_1\\e_2\end{array}$} &
\fbox{$\begin{array}{c}c_{01}\\d_{01}\end{array}$} 
\\[4ex]
&& \fbox{$\begin{array}{c}c_{12}\\d_{12}\end{array}$} &
\fbox{$\begin{array}{c}c_{02}\\d_{02}\end{array}$}   
\\[5ex]
&&&\fbox{$c_{03}$}
\end{array}
$$
Then, the three facet ideals are
$$
\begin{array}{rcl}
I_0 &=& (-c_{03}\,d_{21}+c_{12}\,d_{12},\; -c_{21}\,d_{21}+c_{30}\,d_{12},\; 
-c_{21}\,c_{12}+c_{30}\,c_{03})
\\[1ex]
I_1 &=& (-c_{03}\,d_{01}+c_{02}\,d_{02},\; -c_{01}\,d_{01}+c_{00}\,d_{02},\;
-c_{01}\,c_{02}+c_{00}\,c_{03})
\\[1ex]
I_2 &=& (-c_{30}\,d_{10}+c_{20}\,d_{20},\; -c_{10}\,d_{10}+c_{00}\,d_{20},\; 
-c_{10}\,c_{20}+c_{00}\,c_{30}).
\end{array}
$$
All of them define a specific toric variety which is described, in each case, 
by a 3-dimensional, triangular prism. 
This can be seen by rewriting the binomial equations over the respective
tori as
$$
\textstyle
\frac{c_{12}}{c_{03}}=\frac{d_{21}}{d_{12}} = \frac{c_{30}}{c_{21}},
\hspace{2em}
\frac{c_{01}}{c_{00}}=\frac{d_{02}}{d_{01}} = \frac{c_{03}}{c_{02}},
\hspace{2em}
\frac{c_{00}}{c_{10}}=\frac{d_{10}}{d_{20}} = \frac{c_{20}}{c_{30}}.
$$

\subsubsection{Involving the center}
\label{toddler}
Here we will do the opposite of Subsection~(\ref{higgsFacets})
-- we keep the central variables $e_0, e_1, e_2$ of degree $0$
and the three corner degrees, i.e., the variables $c_{00}, c_{30}, c_{03}$.
This allows to approach the question raised in
Subsection~(\ref{sec:HiggsPoly}): Is there always a true Higgs bundle
having $\HR$ as its associated Higgs polytope? If this were
true, then all the corner degrees have to be involved.

\medskip

At this point we will additionally assume that the intermediate degrees 
do not occur, which is a non-trivial restriction though.
Thus, we have only six variables left, and a {\sc Singular} calculation
yields that the resulting Higgs variety consists of three 
projectively one-dimensional components 
$$
V(e_0-e_1,\; e_2,\; c_{00},\; c_{30}), \hspace{1em}
V(e_0,\; e_1-e_2,\; c_{30},\; c_{03}), \hspace{1em}
V(e_1,\; e_0-e_2,\; c_{00},\; c_{03})
$$
inside $\PP^5$, and that there are three embedded components, too.
In any case, the associated Higgs polytopes are the line segments connecting
a vertex of $\HR$ with the central point $0$.

\section{\peeref{Trace-free} $\CT_X$-Higgs fields on smooth complete surfaces}
\label{sec:MInSur}
Next we sketch how the techniques for $\P^2$ generalise to any smooth, complete toric surface $X$ for the computation of the Higgs range $\HR(X)$ and the associated vector spaces $V_r^0(X)$. 

\subsection{Encoding endomorphisms}
\label{encodeEndo2}
The fan of the Hirzebruch surface $\mathbb{H}_a$, $a\geq2$, is induced by the primitive generators $\rho_0(a)=(-1,-a)$, $\rho_1=(1,0)$, $\rho_2=(0,1)$ and $\rho_3=(0,-1)$. In the same vein as in the previous section we consider the lattice $\Z^3=\Z\rho_0(a)\oplus\Z\rho_1\oplus\Z\rho_2$, $a\geq1$, and identify
$$
N\cong\Z^3/\Z(1,1,a)
\hspace{1em}\mbox{and}\hspace{1em}
M\cong(1,1,a)^\bot=\{r\in\Z^3\kst r_0+r_1+ar_2=0\}\subseteq\Z^3.
$$
While we do not make use of this, we note in passing that
the rays $\rho_0(a)$, $\rho_1$ and $\rho_2$ provide the fan of the
singular weighted projective plane $\PP(1,1,a)$. 
Proceeding as above yields the representation
$$
\textstyle
\widetilde\phi=
\left(\begin{array}{ccc}
0 & ax & -x \\ -ay & 0 & y \\ z & -z & 0
\end{array}\right)
=x A_0(a) + y A_1(a) + z A_2
\leadsto
\phi_0=
\left(\begin{array}{cc}
-ax & x+y \\ -a^2x-z & ax
\end{array}\right),
$$
where 
$$
A_0(a)=\left(\begin{array}{ccc}
 0 & a &-1\\
 0 & 0 & 0\\ 
 0 & 0 & 0
\end{array}\right)
\hspace{2em}
A_1(a)=\left(\begin{array}{ccc}
 0 & 0 & 0\\
-a & 0 & 1\\
 0 & 0 & 0
\end{array}\right)
\hspace{2em}
A_2=\left(\begin{array}{ccc}
 0 & 0 & 0\\
 0 & 0 & 0\\
 1 &-1 & 0
\end{array}\right).
$$
Under this representation, the determinant is given by $\det(\phi_0)=a^2xy+yz+zx$.

\subsection{From filtrations to facets}
\label{filtFac2}
To compute a basis for the vector spaces $V_r^0(X)$, we recall from Section~\ref{filtFac} that an endomorphism $\varphi\in\gEnd(E)$ belongs to the class (i)$_\rho$ if $\varphi(\rho)\in\spann(\rho)$, and to the class (ii)$_\rho$ if $\varphi(E)\subseteq\spann(\rho)\subseteq\ker(\varphi)$. For instance, for $a\geq1$ an endomorphism of class (i)$_{\rho(a)}$ is determined by the eigenvector equation $(xA_0(a)+yA_1(a)+zA_2)\rho_0(a)=\lambda\rho_0(a)$, or equivalently, by the matrix equation
$$
\left(\begin{array}{ccc}
0 & ax &-x\\
-ay & 0 &y\\
z &-z & 0
\end{array}\right)\cdot
\left(\begin{array}{c}0\\1\\a\end{array}\right)=\left(\begin{array}{r}0\\\lambda\\a\lambda\end{array}\right)
$$
which implies $z=-a^2y$. Any such endomorphism is thus of the form 
$I_{\rho_0(a)}(x,y)=x A_0(a)+y(A_1(a)-a^2A_2)$ 
for $x$, $y\in\C$. 
Similarly, any endomorphism of class (ii)$_{\rho_0(a)}$ is given by 
$I\!I_{\rho_0(a)}(x)=x A_0(a)$. 
Table~\ref{tab:HiggsA2} displays the endomorphisms $I_\rho$ and $I\!I_\rho$ 
of type (i)$_{\rho}$ and (ii)$_{\rho}$ for the primitive generators $\rho_0(a)$, $\rho_1$ and $\rho_2$ of $\mathbb{H}_2$. 

\renewcommand{\arraystretch}{1.5}
\begin{table}[hbt]
\begin{tabular}[h]{|c|c|c|}
\hline
$\rho\in\Sigma_a(1)$ & Basis of all $I_\rho$ & Basis of all $I\!I_\rho$ \\
\hline
$\rho_0(a)$ & $A_0(a)$,\; $A_1(a)-a^2A_2$ & $A_0(a)$ \\
\hline
$\rho_1$ & $A_0(a)-a^2 A_2$,\; $A_1(a)$ & $A_1(a)$ \\
\hline
$\rho_2$ & $A_0(a)-A_1(a)$,\; $A_2$ & $A_2(a)$ \\
\hline
\end{tabular}
\medskip
\caption{The endomorphisms $I_\rho$ and $I\!I_\rho$ for $a\geq1$.}
\label{tab:HiggsA2}
\end{table}

\subsection{The Hirzebruch surfaces $\mathbb{H}_a$}
\label{HirzSurf}
With Table~\ref{tab:HiggsA2} at hand we can now determine $\HR(\mathbb{H}_a)$ with associated vector spaces $V_r^0(X)$ exactly in the same way as for the projective space. 

\begin{example}
For $\HR(\mathbb{H}_2)$ the Higgs range is the convex hull of the 
points $r^0=(1,0)$, $r^1=(-1,0)$, $r^2=(1,-2)$ and $r^3=(3,-2)$ 
given by the green polytope in the figure below:  
\begin{figure}[ht]
\begin{tikzpicture}[scale=0.65]
\draw[color=oliwkowy!40] (-3.3,-3.3) grid (6.3,3.3);
\fill[pattern color=green!50, pattern=north east lines]
  (-1,0) -- (1,0) -- (3,-2) -- (1,-2) -- cycle;
\draw[thick,  color=red]
  (-2.2,-2) -- (6.2,-2) (-2.2,2) -- (6.2,2) (-2,-2.2) -- (-2,2.2) (-2.2,2.1) -- (6.2,-2.1);
\draw[thick,  color=blue]
  (-1.2,-1) -- (3.2,-1) (3.2,1) -- (-1.2,1) (-1,-1.2) -- (-1,1.2) (-1.2,1.1) -- (3.2,-1.1);
\draw[thick,  color=green] (0,0) circle (3pt);
\fill[thick,  color=black] 
  (1,0) circle (3pt) (1,-1) circle (3pt) (-1,0) circle (3pt) (0,-1) circle (3pt) (1,-2) circle (3pt) (2,-1) circle (3pt) (2,-2) circle (3pt) (3,-2) circle (3pt);
\draw[thick, color=black]
  (-1,0) -- (1,0) -- (3,-2) -- (1,-2) -- cycle;
\draw[thick,color=black] 
   (1.5,0.5) node{$r^0$} (-1.5,0.5) node{$r^1$}
   (0.5,-2.5) node{$r^2$} (3.5,-2.5) node{$r^3$};
\end{tikzpicture}
\caption{The Higgs range $\HR(\mathbb{H}_2)$}
\end{figure}
We display a basis for the vector spaces $V_r^0(\mathbb{H}_2)$ as in Figure~\ref{fig:HiggsFieldsP2}. The origin is indicated by the green circle, and the polytope of the Higgs range is given by the gray lines:
\begin{figure}[ht]
\newcommand{\coH}[2]{{\begin{minipage}{#1em}{$\ks #2$}\end{minipage}}}
\newcommand{\cid}[1]{{\color{red}s_#1\id_N}}
\newcommand{\aid}[1]{\mbox{ \tiny and }\cid{#1}}
\begin{tikzpicture}[x=3cm, y=1.5cm]
\draw[thick, color=gray]
  (-2,0) circle (2pt)  (0.8,0) circle (2pt)
  (-1.5,-1) circle (2pt) (-0.2,-1) circle (2pt) (1,-1) circle (2pt);
\draw[thick,color=black] 
   (-0.55,0) node{\fbox{$\coH{7.7}{A_1(2)\otimes\rho_2+A_2\otimes\rho_1\\A_0(2)\otimes\rho_2+aA_2\otimes\rho_0(2)\\A_0(2)\otimes\rho_1+A_1(2)\otimes\rho_0}$}}
   (1.2,-1) node{\fbox{$\coH{9.7}{(A_0(2)+4A_1(2)-16A_2)\otimes\rho_2\\[1pt]\phantom{aaaaaaaa}A_2\otimes\rho_0(2)}$}}
   (-1.55,-1) node{\fbox{$\coH{9.0}{(A_0(2)-A_1(2)-4A_2)\otimes\rho_2\\[1pt]\phantom{aaaaaaaa}A_2\otimes\rho_1}$}}
   (0.5,-2.0) node{\fbox{$\coH{2.5}{A_2\otimes\rho_2}$}}
   (-0.2,-1) node{\fbox{$\coH{6.5}{(A_0(2)-A_1(2))\otimes\rho_2\\[1pt]
               A_2\otimes\rho_2,\; A_2\otimes\rho_1}$}};
\draw[thin,color=black] 
   (0.8,0) circle (15pt) node{$\coH{9.7}{(A_0(2)-A_1(2)+4A_2)\otimes\rho_0(2)\\[1pt]\phantom{aaaaaaaa}A_0(2)\otimes\rho_2}$} 
   (-2,0) circle (15pt) node{$\coH{9.7}{(A_0(2)-A_1(2)-4A_2)\otimes\rho_0(2)\\[1pt]\phantom{aaaaaaaa}A_1(2)\otimes\rho_2}$}
   (-0.8,-2.0) circle (15pt) node{$\coH{2.5}{A_2\otimes\rho_2}$}
   (1.8,-2.0) circle (15pt) node{$\coH{2.5}{A_2\otimes\rho_2}$};
\draw[thick,  color=green] (-0.59,0) circle (4pt);
\draw[thin,  color=gray]
  (-2,0) -- (0.8,0) -- (1.8,-2.0) -- (-0.8,-2.0) -- cycle;
\end{tikzpicture}
\caption{\peeref{The trace-free Higgs fields on $\mathbb{H}_2$. For each degree the box respectively the circle at $r\in M$ contains a basis for the linear pre-Higgs fields of degree $r$, cf.\ also Table~\ref{tab:HiggsA2} and the caption from Figure~\ref{fig:HiggsFieldsP2}}.}
\end{figure}
\end{example}

For general $a$ we obtain the Higgs range of $\mathbb{H}_a$ as follows:
\begin{enumerate}
\item[(i)] We keep the lattice points $(-1,0)$, $(0,0)$, $(1,0)$, $(0,-1)$, $(1,-1)$ and $(1,-2)$ together with $V_r^0(\mathbb{H}_a)=V_r^0(\mathbb{H}_2)$. 
\item[(ii)] We add $a-1$ points $(2,-1),\ldots,(a,-1)$ 
with $V_{(x,-1)}^0(\mathbb{H}_a)=V_{(1,-1)}^0(\mathbb{H}_2)$ 
for $x\leq a-1$ and 
$V_{(a,-1)}^0=\spann((A_0(a)+a^2(A_1(a)-a^2A_2))\otimes\rho_2,\;
A_2\otimes\rho_0(a))$. 
  \item[(iii)] Finally, we add $2(a-1)$ points $(2,-2),\ldots,(2a-1,-2)$ with vector space $V_r^0(\mathbb{H}_a)=\spann(A_2\otimes\rho_2)$. 
\end{enumerate}

\begin{example}
For instance, we find the following Higgs range for $\mathbb{H}_4$ (where we only indicated the dimensions of $V_r^0$:
$$
\begin{tikzpicture}[scale=0.65]
\draw[color=oliwkowy!40] (-3.3,-3.3) grid (8.3,3.3);
\fill[pattern color=yellow!90, pattern=north west lines]
  (-1,0) -- (1,0) -- (7,-2) -- (1,-2) -- cycle;
\fill[thick,  color=black] 
  (-1,0) circle (3pt) (1,0) circle (3pt) (0,-1) circle (3pt) (1,-1) circle (3pt) (2,-1) circle (3pt) (3,-1) circle (3pt) (4,-1) circle (3pt) (1,-2) circle (3pt) (2,-2) circle (3pt) (3,-2) circle (3pt) (4,-2) circle (3pt) (5,-2) circle (3pt) (6,-2) circle (3pt) (7,-2) circle (3pt);
\draw[thick,  color=green] (0,0) circle (3pt);
\draw[thick, color=black]  (1,0.5) node{$2$};
\draw[thick, color=black]  (0,0.5) node{$3$};
\draw[thick, color=black]  (-1,0.5) node{$2$};
\draw[thick, color=black]  (0,-0.5) node{$2$};
\draw[thick, color=black]  (4,-0.5) node{$2$};

\draw[thick, color=black]  (1,-0.5) node{$3$};
\draw[thick, color=black]  (2,-0.5) node{$3$};
\draw[thick, color=black]  (3,-0.5) node{$3$};

\draw[thick, color=black]  (1,-1.5) node{$1$};
\draw[thick, color=black]  (2,-1.5) node{$1$};
\draw[thick, color=black]  (3,-1.5) node{$1$};
\draw[thick, color=black]  (4,-1.5) node{$1$};
\draw[thick, color=black]  (5,-1.5) node{$1$};
\draw[thick, color=black]  (6,-1.5) node{$1$};
\draw[thick, color=black]  (7,-1.5) node{$1$};
\end{tikzpicture}
$$
 
\end{example}

\begin{remark}
Similarly, we can deal with $\PP^1\times\PP^1$ after we compute the corresponding basis for endomorphisms of type (i)$_\rho$ and (ii)$_\rho$.
\end{remark}

\subsection{The Higgs range under blow-ups}
\label{subsec:HRBlow}
A general smooth and complete toric surface $X$ is obtained from a minimal
surface by a finite sequence of blow-ups at fixed points of the torus action, so we need to discuss how such blow-ups affect $\HR(X)$.

\medskip

Combinatorically, the blow-up $X'\to X$ of a surface $X$ arises from
subdividing a maximal cone given by $\rho$, $\tau\in\Sigma(1)$ by inserting
the primitive generator $\sigma=\rho+\tau$. This yields new lines
$F^\sigma_i$, $i=0, 1, 2$ which possibly exclude further points of $\HR(X)$ 
or decrease the dimension of the corresponding vector spaces $V_r^0$ 
by adding further linear dependencies, cf.\ Subsection~(\ref{linDeps}). 
We therefore have the

\begin{proposition}
If $X'\to X$ is the blow-up of a smooth toric surface, then we have natural inclusions $\HR(X')\subset\HR(X)$ and $V_r^0(X')\subset V_r^0(X)$ whenever $r\in\HR(X')$. 
\end{proposition}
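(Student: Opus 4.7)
The plan is to reduce the statement to the observation that the blow-up $X' \to X$ only enlarges the set of rays while leaving the ambient Klyachko data for the tangent sheaf unchanged. Specifically, by Example~\ref{ex-Klyachko}(iii), the vector space underlying $\CT_Y$ at the identity equals $N_\C$ for any smooth toric surface $Y$. Since the blow-up corresponds combinatorially to inserting $\sigma = \rho + \tau$ into a maximal cone $\langle \rho,\tau\rangle$ of $\Sigma$ without touching $N$, we have $\Sigma'(1) = \Sigma(1) \cup \{\sigma\}$, and the $\C$-linear maps $\phi: N_\C \to N_\C \otimes N_\C$ form the common ambient space in which $V_r(X)$ and $V_r(X')$ live.

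Next, I would invoke Theorem~\ref{th-describeHiggs}, which characterises membership of $\phi$ in $V_r(X)$ (respectively $V_r(X')$) by filtration-preservation conditions on the contractions $\cohikks_s$, indexed by the rays of $\Sigma(1)$ (respectively $\Sigma'(1)$). Moreover, the filtration $T_\sigma^\bullet$ attached to the new ray has the same shape as those attached to the old ones, with $T_\sigma^1 = \spann(\sigma)$ as the only nontrivial part. Since $\Sigma(1) \subset \Sigma'(1)$, every $\phi$ satisfying the conditions for $X'$ automatically satisfies the ones for $X$, yielding $V_r(X') \subset V_r(X)$. Intersecting with the trace-free subspace, which is defined identically in both cases, gives $V_r^0(X') \subset V_r^0(X)$.

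For the inclusion of Higgs ranges, any admissible lattice point $r$ for $X'$ satisfies $V_r(X) \supseteq V_r(X') \neq 0$ and is therefore admissible for $X$. Because $\HR(X)$ is by definition the convex hull of the admissible points and is itself convex, the inclusion $\HR(X') \subseteq \HR(X)$ follows. No substantial obstacle is expected here: the proposition essentially records that inserting a ray can only add linear constraints on $\phi$; the only mild point worth emphasising is that the ambient vector space is intrinsic to the lattice $N$ and hence insensitive to the subdivision of the fan.
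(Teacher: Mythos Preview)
Your argument is correct and follows essentially the same approach as the paper. The paper's justification is the short paragraph preceding the proposition: inserting the new ray $\sigma=\rho+\tau$ produces additional facet lines $F^\sigma_i$ and hence only adds linear constraints on $\cohikks^r$, which is exactly the mechanism you spell out via Theorem~\ref{th-describeHiggs} and the identification of the common ambient space $N_\C$.
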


\subsection{Fano surfaces}
\label{subsec:Fano}
The precise shape of $\HR(X')$ depends of course on the combinatorics of 
$\HR(X)$ and on the fixed point we blow up. For illustration, consider 
the five smooth toric Fano surfaces given by the reflexive del Pezzo polytopes:
$$
\newcommand{\abstZ}{\hspace{2.5em}}
\begin{tikzpicture}[scale=0.65]
\draw[color=oliwkowy!40] (-0.3,-0.3) grid (3.3,3.3);
\draw[thick,  color=black]
  (0,0) -- (3,0) -- (0,3) -- cycle;
\fill[thick,  color=black] 
  (0,0) circle (3pt) (3,0) circle (3pt) (0,3) circle (3pt);
\draw[thick,  color=black]  (1.5,-1.0) node{$\PP^2$};
\fill[pattern color=yellow!50, pattern=north west lines]
  (0,0) -- (3,0) -- (0,3) -- cycle;
\draw[thick,  color=green] (1,1) circle (3pt);
\end{tikzpicture}
\abstZ
\begin{tikzpicture}[scale=0.65]
\draw[color=oliwkowy!40] (-0.3,-0.3) grid (2.3,2.3);
\draw[thick,  color=black]
  (0,0) -- (2,0) -- (2,2) -- (0,2) -- cycle;
\fill[thick,  color=black] 
  (0,0) circle (3pt) (2,0) circle (3pt) (2,2) circle (3pt) (0,2) circle (3pt);
\draw[thick,  color=black]  (1.0,-1.0) node{$\PP^1\times\PP^1$};
\fill[pattern color=blue!50, pattern=north east lines]
  (0,0) -- (2,0) -- (2,2) -- (0,2) -- cycle;
\draw[thick,  color=green] (1,1) circle (3pt);
\end{tikzpicture}
\abstZ
\begin{tikzpicture}[scale=0.65]
\draw[color=oliwkowy!40] (-0.3,-0.3) grid (3.3,2.3);
\draw[thick,  color=black]
  (0,0) -- (3,0) -- (1,2) -- (0,2) -- cycle;
\fill[thick,  color=black] 
  (0,0) circle (3pt) (3,0) circle (3pt) (1,2) circle (3pt) (0,2) circle (3pt);
\draw[thick,  color=black]  (1.5,-1.0) node{$\PP^2{'}={\mathbb H}_1$};
\fill[pattern color=green!50, pattern=north west lines]
  (0,0) -- (3,0) -- (1,2) -- (0,2) -- cycle;
\draw[thick,  color=green] (1,1) circle (3pt);
\end{tikzpicture}
\abstZ
\begin{tikzpicture}[scale=0.65]
\draw[color=oliwkowy!40] (-0.3,-0.3) grid (2.3,2.3);
\draw[thick,  color=black]
  (0,0) -- (2,0) -- (2,1) -- (1,2) -- (0,2) -- cycle;
\fill[thick,  color=black] 
  (0,0) circle (3pt) (2,0) circle (3pt) (2,1) circle (3pt) 
  (1,2) circle (3pt) (0,2) circle (3pt);
\draw[thick,  color=black]  (1.0,-1.0)
                  node{\makebox[0em]{$\PP^2{''}=(\PP^1\times\PP^1){'}$}};
\fill[pattern color=red!50, pattern=horizontal lines]
  (0,0) -- (2,0) -- (2,1) -- (1,2) -- (0,2) -- cycle;
\draw[thick,  color=green] (1,1) circle (3pt);
\end{tikzpicture}
\abstZ
\begin{tikzpicture}[scale=0.65]
\draw[color=oliwkowy!40] (-0.3,-0.3) grid (2.3,2.3);
\draw[thick,  color=black]
  (0,1) -- (1,0) -- (2,0) -- (2,1) -- (1,2) -- (0,2) -- cycle;
\fill[thick,  color=black] 
  (0,1) circle (3pt) (2,0) circle (3pt) (2,1) circle (3pt) 
  (1,0) circle (3pt) (1,2) circle (3pt) (0,2) circle (3pt);
\draw[thick,  color=black]  (1.0,-1.0)
                  node{$\hspace*{0.8em}\PP^2{'''}$};
\fill[pattern color=gray!50, pattern=vertical lines]
  (0,1) -- (1,0) -- (2,0) -- (2,1) -- (1,2) -- (0,2) -- cycle;
\draw[thick,  color=green] (1,1) circle (3pt);
\end{tikzpicture}
$$
Their Higgs ranges together with the dimension of $V_r^0(X)$ are given as follows:
$$
\newcommand{\abstZ}{\hspace{1.5em}}
\begin{tikzpicture}[scale=0.35]
\draw[color=oliwkowy!40] (-2.3,-3.3) grid (4.3,4.3);
\fill[pattern color=yellow!50, pattern=north west lines]
(-2,1) -- (1,1) -- (1,-2) -- cycle;
\draw[thick,  color=blue]
  (-1,-1) -- (2,-1) -- (-1,2) -- cycle;
\draw[thick,  color=red]
  (-2,4.3) -- (-2,-2.3) (-2.3,-2) -- (4.3,-2) (-2.2,4.2) -- (4.2,-2.2);
\draw[thick,  color=black]
(-2,1) -- (1,1) -- (1,-2) -- cycle;
\fill[thick,  color=black] 
  (1,0) circle (5pt) (1,-1) circle (5pt) (1,-2) circle (5pt) (0,-1) circle (5pt) (-1,0) circle (5pt) (-2,1) circle (5pt) (-1,1) circle (5pt) (0,1) circle (5pt) (1,1) circle (5pt);
\draw[thick,  color=green] (0,0) circle (5pt);
\draw[thick, color=black]  (1,1.7) node{$1$};
\draw[thick, color=black]  (1.7,0.0) node{$2$};
\draw[thick, color=black]  (1.7,-1.7) node{$2$};
\draw[thick, color=black]  (1,-2.7) node{$1$};
\draw[thick, color=black]  (0,0) node{$3$};
\draw[thick, color=black]  (-0.5,-1.7) node{$2$};
\draw[thick, color=black]  (0,1.7) node{$2$};
\draw[thick, color=black]  (-1.7,0) node{$2$};
\draw[thick, color=black]  (-2,1.7) node{$1$};
\draw[thick, color=black]  (-1,1.7) node{$2$};
\end{tikzpicture}
\abstZ
\begin{tikzpicture}[scale=0.35]
\draw[color=oliwkowy!40] (-3.3,-3.3) grid (3.3,4.3);
\fill[pattern color=blue!50, pattern=north west lines]
  (1,0) -- (0,1) -- (-1,0) -- (0,-1) -- cycle;
\draw[thick,  color=blue]
  (-1,-1) -- (1,-1) -- (1,1) -- (-1,1) -- cycle;
\draw[thick,  color=red]
  (-2,2) -- (-2,-2) -- (2,-2) -- (2,2) -- cycle;
\draw[thick,  color=black]
  (1,0) -- (0,1) -- (-1,0) -- (0,-1) -- cycle;
\fill[thick,  color=black] 
  (1,0) circle (5pt) (0,1) circle (5pt) (-1,0) circle (5pt) (0,-1) circle (5pt);
\draw[thick,  color=green] (0,0) circle (5pt);
\draw[thick, color=black]  (0,0) node{$4$};
\draw[thick, color=black]  (1.7,0) node{$2$};
\draw[thick, color=black]  (-1.7,0) node{$2$};
\draw[thick, color=black]  (0,-1.7) node{$2$};
\draw[thick, color=black]  (0,1.7) node{$2$};
\end{tikzpicture}
\abstZ
\begin{tikzpicture}[scale=0.35]
\draw[color=oliwkowy!40] (-3.3,-3.3) grid (5.3,4.3);
\fill[pattern color=green!50, pattern=north west lines]
  (1,0) -- (1,-2) -- (-1,0) -- cycle;
\draw[thick,  color=blue]
  (-1,-1) -- (2,-1) -- (0,1) -- (-1,1) -- cycle;
\draw[thick,  color=red]
  (-2,2) -- (-2,-2) -- (4,-2) -- (0,2) -- cycle;
\draw[thick,  color=black]
  (1,0) -- (1,-2) -- (-1,0) -- cycle;
\fill[thick,  color=black] 
  (1,0) circle (5pt) (1,-1) circle (5pt) (1,-2) circle (5pt) (0,-1) circle (5pt) (-1,0) circle (5pt);
\draw[thick,  color=green] (0,0) circle (5pt);
\draw[thick, color=black]  (1.7,0.0) node{$2$};
\draw[thick, color=black]  (1.7,-1.7) node{$2$};
\draw[thick, color=black]  (1,-2.7) node{$1$};
\draw[thick, color=black]  (0,0) node{$3$};
\draw[thick, color=black]  (-0.5,-1.7) node{$2$};
\draw[thick, color=black]  (-1.7,0) node{$2$};
\end{tikzpicture}
\abstZ
\begin{tikzpicture}[scale=0.35]
\draw[color=oliwkowy!40] (-3.3,-3.3) grid (3.3,4.3);
\fill[pattern color=red!50, pattern=north west lines]
  (0,0) -- (0,-1) -- (-1,0) -- cycle;
\draw[thick,  color=blue]
  (1,0) -- (1,-1) -- (-1,-1) -- (-1,1) -- (0,1) -- cycle;
\draw[thick,  color=red]
  (2,0) -- (2,-2) -- (-2,-2) -- (-2,2) -- (0,2) -- cycle;
\draw[thick,  color=black]
  (0,0) -- (0,-1) -- (-1,0) -- cycle;
\fill[thick,  color=black] 
  (0,-1) circle (5pt) (-1,0) circle (5pt);
\draw[thick,  color=green] (0,0) circle (5pt);
\draw[thick, color=black]  (0,0) node{$3$};
\draw[thick, color=black]  (-0.5,-1.7) node{$2$};
\draw[thick, color=black]  (-1.7,0) node{$2$};
\end{tikzpicture}
\abstZ
\begin{tikzpicture}[scale=0.35]
\draw[color=oliwkowy!40] (-3.3,-3.3) grid (3.3,4.3);
\draw[thick,  color=blue]
  (1,0) -- (1,-1) -- (0,-1) -- (-1,0) -- (-1,1) -- (0,1) -- cycle;
\draw[thick,  color=red]
  (2,0) -- (2,-2) -- (0,-2) -- (-2,0) -- (-2,2) -- (0,2) -- cycle;
\draw[thick,  color=green!50] (0,0) circle (5pt);
\draw[thick,  color=gray!50] (0,0) circle (3pt);
\draw[thick, color=black]  (0,0) node{$3$};
\end{tikzpicture}
$$
In particular, whenever \peeref{the vector spaces are non-trivial,}
we find $V_r^0(X')=V_r^0(\PP^2)$ for $X'=\PP^2{'}$, $\PP^2{''}$ and $\PP^2{'''}$.

\subsection{Higgs fields and their Higgs algebras on $\PP^2{''}$ and $\PP^2{'''}$}
\label{subsec:HiggsPolyDP}
Using Subsection~(\ref{noPre}) we exhibit some explicit Higgs fields on the del Pezzos $X=\PP^2{''}$ and $\PP^2{'''}$ and compute their associated invariants. 

\medskip 

We start with the degree six del Pezzo $\PP^2{'''}$ where $\HR(\PP^2{'''})=\{(0,0)\}$. 
From Subsection~(\ref{toddler}) we gather that the space of Higgs fields is given by the three components $V(e_0-e_1, e_2)$, $V(e_0, e_1-e_2, )$ and $V(e_1, e_0-e_2)$ with corresponding Higgs fields 
\begin{align*}
\Phi_1&=A_2\otimes\rho_0+A_2\otimes\rho_1+(A_0+A_1)\otimes\rho_2\\
\Phi_2&=(A_1+A_2)\otimes\rho_0+A_0\otimes\rho_1+A_0\otimes\rho_2\\
\Phi_3&=A_1\otimes\rho_0+(A_0+A_2)\otimes\rho_1+A_1\otimes\rho_2.
\end{align*}
The resulting Higgs polytopes $\nabla(\Phi_i)$, $i=1,\,2,\,3$, coincide trivially with $\HR(\PP^2{'''})=\{(0,0)\}$. Since $\det\Phi_i=-d^2$ for $i=1,\,2,\,3$, the Higgs algebras $\CA(\Phi_i)$ are all isomorphic to $\C[M][z]/(z^2-d^2)\cong\C[M]\times\C[M]$. It follows that $\CA\otimes_t\C\cong\C\oplus\C\cong\CA(t)$ is the product ring. The fibre of the spectral variety $\spec\CA(\Phi_i)\to T$ correspond thus of the two distinct eigenvalues of $\Phi_i$. 

\medskip

Next we turn to the degree seven del Pezzo $\PP^2{''}$. A {\sc Singular} 
aided computation yields for instance the three dimensional component $V(e_0-e_1,c_{12},d_{21},e_2)$ with corresponding Higgs field
$$
\Phi=\big(e_1(A_0-A_2)\chi^{(0,0)}+d_{12}(A_0-A_2)\chi^{(0,-1)}+c_{21}A_1\chi^{(-1,0)}\big)\otimes\rho_2.
$$
Depending on the concrete choice of the coefficients,
the Higgs polytope $\nabla(\Phi)$ realises every subpolytope of the 
Higgs range $\HR(\PP^2{''})=\{(-1,0),(0,0),(0,-1)\}$; 
generically, both polytopes coincide. 

\medskip

The Higgs algebras $\CA(\Phi)$ is again generated by a single 
Higgs field with minimal polynomial $\mu(z)=z^2+\det\Phi$. Since $\det\Phi=-d^2$ is a square in $\C[M]$, the resulting algebra $\CA(\Phi)$ is again isomorphic to $\C[M]\times\C[M]$. 
Furthermore, $\CA(\Phi)\otimes_t\C\cong\CA(t)$. The difference to the previous case is that up to isomorphism we have now two isomorphism types of $\CA(t)$: If $t\in T$ is a zero of $\delta$, then $\CA(t)=\C[\Phi(t)]\cong\C[x]/(x^2)$ for $\Phi$ is nilpotent. Otherwise, $\CA(t_0)$ is just the product ring $\C\times\C$.

\begin{remark}
We can also consider the determinant as a map from $\End(E)\otimes\C[M]\otimes N\to\C[M]\otimes S^\kbb N$ (the toric version of the {\em Hitchin map}~\cite{hi87},~\cite[Section 6]{si94II}). For our special examples the generators $\Phi_i$ have a triangular form which implies that the determinant $\det\Phi_i\in S^\kbb N\otimes\C[M]$ admits a square root in $N\otimes\C[M]$. \peeref{The latter defines two vector fields on $T$ whose images in the total space of the corresponding tangent bundle represent the spectral variety.}
\end{remark}

\bibliographystyle{alpha}
\bibliography{coHiggs}
\end{document}